\newtheorem{definition}{Definition}[section]
\newtheorem{theorem}[definition]{Theorem}
\newtheorem{lemma}[definition]{Lemma}
\newtheorem{proposition}[definition]{Proposition}
\newtheorem{claim}[definition]{Claim}
\newtheorem{conjecture}[definition]{Conjecture}
\theoremstyle{definition}
\newtheorem{remark}[definition]{Remark}
\begin{document}

\date{}
\title
{Coloring graphs of various maximum degree from random lists}

\author{
{\sl Carl Johan Casselgren\thanks{{\it E-mail address:} 
carl.johan.casselgren@liu.se
\, Part of the work done
while the author was a postdoc at University of Southern Denmark
and at Mittag-Leffler Institute.
Research supported by postdoctoral grants from
SVeFUM and Mittag-Leffler Institute.}} \\ 
Department of Mathematics,
Link\"oping University, \\SE-581 83 Link\"oping, Sweden}

\maketitle

\bigskip 
\noindent
{\bf Abstract.}
	Let $G=G(n)$ be a graph on $n$ vertices
	with maximum degree $\Delta=\Delta(n)$.
	Assign to each vertex $v$ of $G$ a list $L(v)$ of colors
	by choosing each list independently and uniformly at random from all 
	$k$-subsets of a color set
	$\mathcal{C}$ of size $\sigma= \sigma(n)$.
	Such a list assignment is called a 
	\emph{random $(k,\mathcal{C})$-list assignment}.
	In this paper, we are interested in
	determining
	the asymptotic probability (as $n \to \infty$)
	of the existence of a proper coloring $\varphi$ of $G$,
	such that $\varphi(v) \in L(v)$ for every vertex $v$ of $G$,
	a so-called $L$-coloring.
	We give various lower bounds on $\sigma$, in
	terms of $n$, $k$ and $\Delta$, which ensures
	that with probability tending to 1 as $n \to \infty$
	there is an $L$-coloring of $G$.
	In particular, we show, for all fixed $k$ and growing $n$,
	that if $\sigma(n) = \omega(n^{1/k^2} \Delta^{1/k})$
	and
	$\Delta=O\left(n^{\frac{k-1}{k(k^3+ 2k^2 - k +1)}}\right)$,
	then the probability
	that $G$ has an $L$-coloring tends to 1 as $n \rightarrow \infty$.
	If $k\geq 2$ and $\Delta= \Omega(n^{1/2})$, then the same conclusion
	holds provided that
	$\sigma=\omega(\Delta)$.
	We also give related results
	for other bounds on $\Delta$, when $k$ is constant
	or a strictly increasing function of $n$.
\bigskip

\noindent
\small{\emph{Keywords: list coloring, random list, coloring from 
random lists}}

\section{Introduction}
	Given a graph $G$, assign to each vertex $v$ of $G$ a set
	$L(v)$ of colors (positive integers).
	Such an assignment $L$ is called a \emph{list assignment} for $G$ and
	the sets $L(v)$ are referred
	to as \emph{lists} or \emph{color lists}.
	If all lists have equal size $k$, then $L$
	is called a \emph{$k$-list assignment}.
	We then want to find a proper
	vertex coloring $\varphi$ of $G$,
	such that $\varphi(v) \in L(v)$ for all
	$v \in V(G)$. If such a coloring $\varphi$ exists then
	$G$ is \emph{$L$-colorable} and $\varphi$
	is called an \emph{$L$-coloring}. Furthermore, $G$ is
	called \emph{$k$-choosable} if it is $L$-colorable
	for every $k$-list assignment $L$.
	
	This particular variant of vertex coloring is
	known as \emph{list coloring}
	and was introduced by Vizing \cite{Vizing} and independently
	by Erd\H os et al. \cite{ERT}.
	
	In this paper we continue the study of the problem
	of coloring graphs from random lists introduced
	by Krivelevich and Nachmias \cite{Krivelevich1,
	Krivelevich2}:
	Assign lists of 
	colors to the vertices of a graph $G = G(n)$ with $n$ vertices
	by choosing for
	each vertex $v$ its list $L(v)$ independently and uniformly at random
	from all $k$-subsets of a
	color set $\mathcal{C} = \{1,2,\dots,\sigma\}$.
	Such a list assignment is called a 
	\emph{random $(k,\mathcal{C})$-list assignment} for $G$.
	Intuitively it should hold that the larger $\sigma$ is,
	the more spread are the colors chosen for the lists
	and thus the more likely it is that we can find
	a proper coloring of $G$ with colors from the lists.
	The question that we address in this paper is
	how large $\sigma = \sigma(n)$
	should be
	in order to guarantee that with high 
	probability\footnote{An event $A_n$ occurs 
	\emph{with high probability} (abbreviated {\bf whp})
	if $\lim_{n \to \infty} \mathbb{P}[A_n] = 1$.}
	(as $n \to \infty$) there is a proper coloring
	of the vertices of $G$ with colors 
	from the random list assignment.
	
	This problem was first studied by Krivelevich and Nachmias
	\cite{Krivelevich1, Krivelevich2}
	for the case of powers of cycles and
	the case of complete bipartite graphs 
	with parts of equal size $n$.
	In the latter case they showed that for all fixed $k \geq 2$,
	the property of having a proper coloring from a random
	$(k,\mathcal{C})$-list assignment exhibits a sharp threshold,
	and that the location of that threshold is exactly
	$\sigma(n) = 2n$ for $k=2$.
	In \cite{Casselgren}, we generalized the second part of this
	result and showed
	that for a complete multipartite graph with $s$ parts  (fixed $s \geq 3$)
	of
	equal size $n$, the property of having a proper coloring from
	a random $(2,\mathcal{C})$-list assignment, has a sharp
	threshold at $\sigma(n) = 2(s-1)n$.
		
	Let $C^r_n$ be the $r$th power of a cycle
	with $n$ vertices.
	For powers of cycles, Krivelevich and Nachmias proved the 
	following theorem establishing a coarse threshold for the property
	of being colorable from a random list assignment.
	
\begin{theorem}
\label{th:powercycle}
	\cite{Krivelevich1}
	Assume $r, k$ are fixed integers satisfying
	$r \geq k$ and let $L$ be a random $(k,\mathcal{C})$-list assignment
	for $C_n^r$. If we denote by 
	$p_C(n)$ the probability that $C_n^r$ is $L$-colorable, then
\[ p_C(n) = \left\{ 
	\begin{array}{ll}
	o(1), & \sigma(n) = o(n^{1/k^2}), \\
	1 - o(1), & \sigma(n) = \omega(n^{1/k^2}).
	\end{array} \right. 
	  \] 		
\end{theorem}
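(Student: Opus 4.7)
The approach splits around the single essential obstruction: $k+1$ consecutive vertices $v_i,v_{i+1},\ldots,v_{i+k}$ all of whose lists coincide. Since $r\geq k$ these vertices are pairwise adjacent in $C_n^r$, and $k+1$ mutually adjacent vertices cannot all be properly coloured from a common $k$-set, so any such coincidence rules out an $L$-coloring.

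For the lower bound, let $X_i$ be the indicator that $L(v_i)=L(v_{i+1})=\cdots=L(v_{i+k})$ and $X=\sum_{i=1}^n X_i$. A direct count yields $\mathbb{E}[X_i]=\binom{\sigma}{k}^{-k}$, so
\[ \mathbb{E}[X]=\frac{n}{\binom{\sigma}{k}^k}=\Theta\!\left(\frac{n}{\sigma^{k^2}}\right)\to\infty\qquad\text{whenever }\sigma(n)=o(n^{1/k^2}). \]
Indicators $X_i,X_j$ are independent for $|i-j|>k$; for $|i-j|=m\leq k$ the joint event $X_iX_j=1$ forces $k+m+1$ consecutive lists to be equal, contributing $\binom{\sigma}{k}^{-(k+m)}$ to $\mathbb{E}[X_iX_j]$. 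Summing, the correlation term is of smaller order than $\mathbb{E}[X]^2$ throughout the regime $\sigma=o(n^{1/k^2})$, so Chebyshev's inequality yields $X\geq 1$ whp and hence $p_C(n)=o(1)$.

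For the upper bound, the same first-moment estimate gives $\mathbb{E}[X]=o(1)$ when $\sigma(n)=\omega(n^{1/k^2})$, so whp no $k+1$ consecutive lists coincide. I would strengthen this into the local Hall condition that every window of $k+1$ consecutive lists has union of size at least $k+1$ (the only nontrivial case, since shorter subfamilies of $k$-lists satisfy Hall's condition automatically). This guarantees an SDR on each local $(k+1)$-clique. To produce a global $L$-coloring I plan to traverse the cycle and pick $c_i\in L(v_i)\setminus\{c_{i-r},\ldots,c_{i-1}\}$ using these SDRs to keep the procedure alive, then close the cycle by an exchange argument: fix $c_1,\ldots,c_r$ in advance and, at the end, re-route the last $O(r)$ choices to match the start, exploiting positions where two or more valid extensions are available.

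The main obstacle is this cycle-closure step: the local Hall condition is clean on the path power $P_n^r$ but ignores the cyclic identification that turns $P_n^r$ into $C_n^r$. I expect the fix to require a separate first-moment estimate bounding the number of positions at which the greedy step has a unique available colour, combined with a short exchange lemma showing that positions with at least two options can absorb any mismatch on the closing arc; this reduces the global consistency problem to a small combinatorial correction handled whp by the same threshold.
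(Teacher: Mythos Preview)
First, note that the paper does not itself prove Theorem~\ref{th:powercycle}; it is quoted from \cite{Krivelevich1} as background, and the paper's contribution is the generalisations (Theorems~\ref{th:lists} and~\ref{th:main1}) via the machinery of $L$-critical subgraphs and $(\varphi,L)$-alternating paths (Lemma~\ref{lem:charac}).

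Your lower bound is fine and is essentially the second-moment argument the paper reproduces after Theorem~\ref{th:main1} to establish sharpness (see the calculation around~\eqref{eq:expect} and Claim~\ref{cl:cheb}).

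Your upper bound, however, has a genuine gap. The condition ``no $k+1$ consecutive lists coincide'' is \emph{equivalent} to your local Hall condition (since $k+1$ sets of size $k$ have union of size $\leq k$ only if they are all equal), so your ``strengthening'' adds nothing. More importantly, this condition is \emph{not sufficient} for $L$-colourability even of the path power. Take $k=r=2$ and assign the lists
\[
\{1,2\},\ \{1,2\},\ \{1,3\},\ \{1,3\},\ \{1,2\},\ \{1,2\}
\]
to $v_1,\dots,v_6$ in $P_6^2$. No three consecutive lists are identical, yet $\{c_1,c_2\}=\{c_5,c_6\}=\{1,2\}$ and $\{c_3,c_4\}=\{1,3\}$ are forced, whence $c_3\notin\{c_1,c_2\}$ gives $c_3=3$, $c_4=1$, contradicting $c_4\neq c_6\in\{1,2\}$. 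So the obstruction you isolate is not the only one, and the failure is already in the interior of the path, not at the cyclic seam; no exchange/closure argument built on local SDRs can repair this.

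What both \cite{Krivelevich1} and this paper actually do is control \emph{all} minimal obstructions simultaneously. In the paper's language one shows that whp there is no $L$-bad proper triple $(F,v,R)$ for any $|V(F)|\geq k+1$; the $(k+1)$-clique with identical lists is merely the smallest such $F$, and its contribution~\eqref{eq:probcliques} dominates the first-moment sum, which is why it governs the threshold. But the larger $L$-critical configurations (such as the six-vertex one above) must still be ruled out term by term, via estimates of the type in Lemmas~\ref{lem:numbersubgraphs} and~\ref{lem:bad}. Your proposal omits this entire part of the argument.
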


	In \cite{Casselgren2, Casselgren3} we generalized Theorem 
	\ref{th:powercycle}:

\begin{theorem}
\label{th:lists}
	\cite{Casselgren2, Casselgren3}
	Let $G = G(n)$ be a graph on $n$ vertices with
	maximum degree bounded by some absolute constant,
	$k$ a fixed positive integer, and
	$L$ a random $(k,\mathcal{C})$-list assignment for $G$.
	If $\sigma(n) = \omega(n^{1/k^2})$, then
	{\bf whp} $G$ is $L$-colorable.
\end{theorem}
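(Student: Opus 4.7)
\medskip
\noindent\textbf{Proof proposal.}
The plan is to combine a probabilistic estimate that rules out small ``bad'' local configurations with a deterministic coloring procedure that succeeds in their absence, generalizing the clique-based argument that underlies Theorem~\ref{th:powercycle}.

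First I would identify the critical bad event: a set $S$ of $k+1$ vertices lying pairwise within some fixed constant distance in $G$ and whose lists all equal a common $k$-subset of $\mathcal{C}$. For a specific such tuple $S$ and a specific $k$-set $T\subseteq\mathcal{C}$, the probability that every vertex of $S$ has list exactly $T$ is $\binom{\sigma}{k}^{-(k+1)}$. Summing over the $\binom{\sigma}{k}=\Theta(\sigma^k)$ choices of $T$ and over the $O(n)$ admissible vertex tuples---there are only $O(n)$ because the bounded degree gives each vertex a constant-size neighborhood of any fixed radius---yields an expected count of order $n/\sigma^{k^2}$, which is $o(1)$ precisely when $\sigma=\omega(n^{1/k^2})$. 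So whp no such bad $(k+1)$-tuple exists.

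Next, I would show that in the absence of bad configurations $G$ is $L$-colorable. My intended approach is an iterative reduction: partition $V(G)$ into a bounded number of independent sets via $\chi(G)\leq\Delta+1$, process the color classes one at a time, and use the absence of bad configurations to ensure that each reduced list remains nonempty at the moment its vertex is processed. The bounded degree keeps the number of already-committed neighbors a constant at every step, so only a constant number of colors can be forbidden from any list.

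The main obstacle I anticipate is precisely this deterministic step. For powers of cycles, the rigid clique structure makes it immediate that the only local failure mode is a bad clique with identical lists. For general bounded-degree $G$ such cliques need not exist, yet subtler local obstructions---odd cycles forced onto two colors, Hall-type defects on near-cliques, unbalanced stars---may still prevent an $L$-coloring. I expect that dealing with these requires enumerating a somewhat broader family of bad local events and showing that each of them has expected count of order at most $n/\sigma^{k^2}$, so that the threshold $\sigma=\omega(n^{1/k^2})$ is enough to rule them all out simultaneously via a single union bound.
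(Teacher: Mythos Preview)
Your plan has a genuine gap at the deterministic step, and you have correctly put your finger on it. The greedy scheme you sketch---color $\Delta+1$ independent sets one at a time---fails whenever $k \le \Delta$: a vertex may see up to $\Delta$ already-colored neighbors, potentially blocking all $k$ colors in its list, and nothing in the single bad event you rule out (identical lists on $k+1$ nearby vertices) prevents this. Your closing paragraph diagnoses the difficulty but does not resolve it: ``enumerating a somewhat broader family of bad local events'' is the right instinct, but without specifying which family and why each member has expected count $O(n/\sigma^{k^2})$, there is no argument.

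The paper does not prove Theorem~\ref{th:lists} separately---it is quoted from \cite{Casselgren2,Casselgren3}---but the proof of the more general Theorem~\ref{th:main1} specializes to it when $\Delta$ is bounded and shows what the missing ingredient is. Rather than attempt any coloring procedure, one works directly with the structure of a minimal obstruction. Lemma~\ref{lem:charac} says that if $G$ is not $L$-colorable then it contains an $L$-bad proper triple $(F,v_1,R)$: a connected induced subgraph $F$, a root $v_1$, and a rank function $R$ such that for some $L$-coloring $\varphi$ of $F-v_1$ every vertex of $F$ lies on a $(\varphi,L)$-alternating path from $v_1$ and the list/color constraints (i)--(iii) of that lemma hold. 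One first shows, via the expected number of long alternating paths, that {\bf whp} any such $F$ has bounded size; then Lemma~\ref{lem:numbersubgraphs} bounds the number of proper triples on $m$ vertices by $n\Delta^{m-1}(m-1)!$, Lemma~\ref{lem:bad} bounds the probability that a given one is $L$-bad, and a first-moment sum over $m\ge k+1$ finishes. The $(k+1)$-clique with identical lists that you isolate is exactly the $m=k+1$ term of this sum; the ``broader family'' you anticipate is the collection of bad proper triples with $m\ge k+2$, and the alternating-path characterization is precisely what makes their enumeration and probability estimation tractable.
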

	
	Note that Theorem \ref{th:lists} is best possible
	for graphs with bounded maximum degree.
	Further results on the problem of coloring graphs from random lists
	appears in \cite{AndrenCasselgrenOhman, CasselgrenHaggkvist}.

	In this paper we consider random $(k,\mathcal{C})$-list assignments
	for graphs $G=G(n)$ on $n$ vertices 
	whose maximum degree 
	$\Delta = \Delta(n)$ is an increasing function of $n$.
	We would like to suggest the following:
	
	\begin{conjecture}
	\label{conj:listdegree}
		Let $G=G(n)$ be a graph on $n$ vertices
	with maximum degree
	at most $\Delta=\Delta(n)$, $k \geq 2$ a fixed positive integer, 
	and $L$ a random $(k,\mathcal{C})$-list assignment for $G$.
		\begin{itemize}
		
			\item[(i)] If $\Delta = o\left(n^{\frac{1}{k^2-k}}\right)$ 
			and $\sigma = \omega\left(n^{1/k^2} \Delta^{1/k}\right)$,
			then {\bf whp} $G$ is  $L$-colorable.
			
			\item[(ii)] If $\Delta = \Omega\left(n^{\frac{1}{k^2-k}}\right)$
			and $\sigma = \omega\left(\Delta\right)$, then {\bf whp}
			$G$ is $L$-colorable.

		\end{itemize}
	\end{conjecture}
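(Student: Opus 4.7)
The proof I propose splits into the two regimes of the conjecture, treated by distinct techniques that are tight at opposite ends of the range of $\Delta$; both thresholds arise naturally from first-moment computations. For part~(i), the condition $\sigma=\omega(n^{1/k^2}\Delta^{1/k})$ is exactly what is needed to rule out the appearance in $G$ of a ``list-uniform'' copy of $K_{k+1}$: there are at most $O(n\Delta^k)$ cliques of size $k+1$ in $G$, and for any fixed such clique the probability that all $k+1$ vertices receive the same $k$-list equals $\binom{\sigma}{k}^{-k}=\Theta(\sigma^{-k^2})$, so the expected number of list-uniform $(k+1)$-cliques is $o(1)$ exactly under the conjectured bound on $\sigma$. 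Following the scheme of \cite{Casselgren2,Casselgren3}, I would then: (a)~isolate a finite family $\mathcal F$ of local configurations whose absence in $(G,L)$ suffices to guarantee $L$-colorability, by lifting the structural core of the bounded-degree argument to arbitrary $\Delta$; (b)~for each $H\in\mathcal F$ bound the number of copies of $H$ in $G$ by $O(n\Delta^{|V(H)|-1})$ and the probability that the lists on $H$ display the requisite overlap pattern by some function $p_H(\sigma)$; (c)~verify that every resulting expected count is $o(1)$. The condition $\Delta=o(n^{1/(k^2-k)})$ is the precise threshold that keeps the $(k+1)$-clique the dominating contribution in (c), so that no heavier $H\in\mathcal F$ forces a stronger hypothesis.

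For part~(ii), where $\Delta=\Omega(n^{1/(k^2-k)})$ and $\sigma=\omega(\Delta)$, I would exploit the sparsity of the \emph{conflict graph} $G'$ on $V(G)$ with edge set $\{uv\in E(G):L(u)\cap L(v)\ne\emptyset\}$. Since $\Pr[L(u)\cap L(v)\ne\emptyset]\sim k^2/\sigma=o(1/\Delta)$, only a negligible fraction of the edges of $G$ survive in $G'$, and the task reduces to producing an assignment $\varphi(v)\in L(v)$ that is proper on $G'$ (equivalently on $G$, since on $E(G)\setminus E(G')$ any assignment is automatically proper). The natural instrument is the Lov\'asz Local Lemma applied to the bad events $B_{uv}$, $uv\in E(G)$, that $\varphi(u)=\varphi(v)$ under the uniform independent choice $\varphi(v)\in L(v)$: these have probability $|L(u)\cap L(v)|/k^2$ conditional on the lists and dependency degree at most $2\Delta-2$. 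A direct symmetric LLL only covers the case in which all pairwise intersections are simultaneously very small, so to push through the full range $\sigma=\omega(\Delta)$ I would first establish by a separate first-moment estimate that atypically large intersections $|L(u)\cap L(v)|\ge T$ are absent whp for a suitable $T=T(\sigma,\Delta)$, and then either invoke an asymmetric LLL with intersection-dependent thresholds, or break the argument into a Beck--Molloy--Reed two-round scheme in which LLL handles the bulk of $G$ while a residual subgraph, shown to consist of small components, is finished off deterministically.

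The main obstacle, as confirmed by the gap between the conjectured and achieved bounds in the paper, is step (a)--(c) of part~(i). Theorem~\ref{th:lists} was proved in a regime where $\mathcal F$ may be taken genuinely finite and $\Delta$-independent; for growing $\Delta$ the multiplicity $O(n\Delta^{|V(H)|-1})$ of each $H\in\mathcal F$ inflates, and any $H$ heavier than $K_{k+1}$ whose overlap probability $p_H(\sigma)$ does not decay quickly enough becomes the binding constraint on the permissible $\Delta$. Proving uniformly over all $H\in\mathcal F$ that $K_{k+1}$ is the dominant term appears to be exactly what forces the paper to accept $\Delta=O(n^{(k-1)/(k(k^3+2k^2-k+1))})$ in place of the conjectured $\Delta=o(n^{1/(k^2-k)})$; closing this gap seems to require either a sharper structural characterization of list-colorability at list size $k$, or a genuinely new probabilistic coloring argument that works throughout the full low-$\Delta$ regime.
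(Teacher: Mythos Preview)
The statement you are attempting is presented in the paper as a \emph{conjecture}, and the paper does not prove it. What is proved are partial results: Theorem~\ref{th:main1} establishes part~(i) only under the much more restrictive hypothesis $\Delta=O\bigl(n^{(k-1)/(k(k^3+2k^2-k+1))}\bigr)$, and Theorem~\ref{th:main2} settles both parts in the single case $k=2$. Your closing paragraph effectively concedes this, so what you have submitted is a strategy rather than a proof.

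For part~(i) your outline coincides with the paper's method: a first-moment calculation over $L$-bad configurations, with the list-uniform $(k+1)$-clique as the intended dominant term. You correctly isolate both the threshold $n\Delta^k\binom{\sigma}{k}^{-k}=o(1)$ and the obstruction. In the paper's implementation the family $\mathcal F$ is not finite once $\Delta\to\infty$: one sums over $L$-bad proper triples (Lemmas~\ref{lem:charac}--\ref{lem:bad}) of every size up to roughly $\Delta^{k^2+k}$, and it is precisely the contribution of triples with $m>k+1$ vertices that forces the restrictive bound on $\Delta$ in Theorem~\ref{th:main1}. Your step~(c) asserts that $K_{k+1}$ remains dominant throughout $\Delta=o(n^{1/(k^2-k)})$, but nothing in the sketch explains how to achieve this; that is exactly the open point.

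For part~(ii) your route is genuinely different from the paper's. The paper's $k=2$ argument rests on Lemma~\ref{lem:2list}, a structural characterisation of non-$2$-list-colorability via $L$-alternating cycles and lollipops, which yields a far sharper first-moment bound than Lemma~\ref{lem:charac}; no analogue for $k\ge 3$ is given, and part~(ii) is left open for $k\ge 3$. Your conflict-graph/LLL approach has a concrete gap. Conditioned on the lists, every surviving bad event $B_{uv}$ with $uv\in E(G')$ has probability at least $1/k^2$, a fixed positive constant independent of $n$, while the dependency degree of $B_{uv}$ is governed by $d_{G'}(u)+d_{G'}(v)$. Under the bare hypothesis $\sigma=\omega(\Delta)$ one gets only $\mathbb E[d_{G'}(v)]=o(1)$; the \emph{maximum} degree of $G'$ over $n$ vertices is typically unbounded (of order $\log n/\log\log n$ when $\sigma/\Delta$ grows slowly), so neither the symmetric nor the asymmetric LLL condition can be met. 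Bounding intersection sizes $|L(u)\cap L(v)|$ does not help, because the bottleneck is the number of neighbours in $G'$, not the size of individual overlaps. A two-round scheme would have to treat the high-$G'$-degree vertices separately, but you give no argument that this residual piece has small components, and there is no evident reason it should when $\sigma/\Delta\to\infty$ arbitrarily slowly.
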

	
	Note that when $\Delta$ is bounded then part (i) of the conjecture
	reduces to Theorem \ref{th:lists}.
	Moreover, for the case $k=1$, it
	is easily seen that
	for a graph $G=G(n)$ on $n$ vertices with strictly increasing
	maximum degree $\Delta = \Delta(n)$, 
	the coarse threshold for colorability from a
	random $(1,\mathcal{C})$-list assignment trivially
	occurs at $\sigma = n \Delta$.

	We prove that part (i) of Conjecture \ref{conj:listdegree} is true
	for a slightly more restrictive bound on $\Delta$:

\begin{theorem}
\label{th:main1}
	Let $G=G(n)$ be a graph on $n$ vertices
	with maximum degree
	at most $\Delta=\Delta(n)$, $k$ a fixed positive integer, and $L$ a random
	$(k,\mathcal{C})$-list assignment for $G$. If 
	$\sigma(n) = \omega\left(n^{1/k^2} \Delta^{1/k}\right)$ and
	$\Delta = O\left(n^{\frac{k-1}{k(k^3+ 2k^2 - k +1)}}\right)$, 
	then {\bf whp} $G$ is $L$-colorable.
\end{theorem}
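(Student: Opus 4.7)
The plan is to adapt the strategy used to prove Theorem \ref{th:lists} to the case when $\Delta = \Delta(n)$ may grow with $n$. I would proceed in four steps: (1) separate the vertices into \emph{good} and \emph{bad} according to whether the random list $L(v)$ has a certain local structure relative to the lists of its neighbors; (2) show that \textbf{whp} the bad vertices induce a subgraph whose connected components have bounded size; (3) find an $L$-coloring of the good subgraph using a restricted part of the palette; and (4) extend the coloring to the bad components using the untouched remainder of each list.

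For step (1), I would fix a parameter $t = t(\sigma,\Delta,k)$ --- intuitively, a threshold for how ``spread out'' $L(v)$ should be --- and call $v$ bad if, for instance, $L(v)$ is contained in some canonically determined small subset of $\mathcal{C}$, or if $L(v)$ shares too many colors with the lists of vertices in its neighborhood. A direct calculation using $\sigma = \omega(n^{1/k^2}\Delta^{1/k})$ shows that the probability $p$ that a given vertex is bad is small; this is where the exponent $1/k^2$ enters the analysis.

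For step (2), which is the heart of the argument, I would bound the expected number of connected bad configurations on $s$ vertices by $n\Delta^{s-1}p^s$, using the standard count for the number of embeddings of a tree on $s$ vertices into a graph of maximum degree $\Delta$. Requiring this expectation to be $o(1)$ for every $s$ beyond some constant $s_0 = s_0(k)$ imposes a quantitative relation between $\Delta$, $n$ and $p$, and after optimizing the precise definition of bad (and hence the value of $p$) the resulting constraint becomes exactly $\Delta = O(n^{(k-1)/(k(k^3+2k^2-k+1))})$. The elaborate exponent $k^3 + 2k^2 - k + 1$ is the algebraic output of this optimization.

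Steps (3) and (4) are the more deterministic parts. The good subgraph can be colored either by a Hall-type matching argument on a bipartite auxiliary graph (vertices versus the colors of the restricted palette) or by a direct application of the Lov\'asz Local Lemma, exploiting the many ``fresh'' colors still available in each good list; the small bad components can then be handled with standard list-coloring tools, since the colors used in step (3) touch only a small portion of each bad list. The main obstacle is step (2): calibrating the definition of ``bad'' tightly enough to drive $p$ down, while still keeping the union bound over all bad configurations summable up to the value of $\Delta$ permitted by the theorem.
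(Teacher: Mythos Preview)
Your outline is not the paper's argument, and as written it is too schematic to be a proof. The paper never separates vertices into ``good'' and ``bad'' or colors in two phases. Instead it works directly with $L$-critical subgraphs: using Lemma~\ref{lem:charac}, any such subgraph $F$ belongs to an \emph{$L$-bad proper triple} $(F,v_1,R)$, where $R$ is a rank function coming from $(\varphi,L)$-alternating paths rooted at $v_1$. One first-moment calculation (essentially \eqref{eq:pathsum}) shows that \textbf{whp} no $(\varphi,L)$-alternating path in $G$ has more than $k^2+k$ vertices, which forces $|V(F)|\le \Delta^{k^2+k}$. A second first-moment calculation then sums, over $m$ from $k+2$ to $\Delta^{k^2+k}$, the product of the count of proper triples on $m$ vertices (Lemma~\ref{lem:numbersubgraphs}) and the probability such a triple is $L$-bad (Lemma~\ref{lem:bad}). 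The exotic exponent $\frac{k-1}{k(k^3+2k^2-k+1)}$ is not the output of an optimization over a notion of ``bad vertex''; it is precisely the condition under which the ratio $k^k\Delta^{k^2+2k}/\sigma^{k-1}$ of the resulting geometric series stays bounded when $\sigma$ is at the threshold $n^{1/k^2}\Delta^{1/k}$, the $\Delta^{k^2+2k}$ coming from $m\cdot\Delta^{k}$ with $m\le \Delta^{k^2+k}$.

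As for your own strategy, there are two concrete gaps. First, step~(3) is the hard part and you do not say how it works: having ``spread-out'' lists in no way guarantees that the good subgraph is $L$-colorable, and neither a Hall/matching argument nor the Local Lemma applies without a much more specific hypothesis than ``$L(v)$ is not contained in a small canonical set'' or ``$L(v)$ does not share too many colors with neighbors.'' You would need, at minimum, that for every $v$ and every $c\in L(v)$ only a bounded number of good neighbors have $c$ in their list, and even then the LLL dependency graph must be controlled. Second, your step~(2) bound $n\Delta^{s-1}p^s$ cannot by itself generate the polynomial $k^3+2k^2-k+1$; that requires an additional size parameter (in the paper, the maximum length of an alternating path) interacting with the summation, and you have no analogue of that mechanism. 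Without a concrete definition of ``bad'' and an explicit computation, the assertion that the constraint ``becomes exactly'' the stated bound on $\Delta$ is unsupported.
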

 
		For the case $k=2$ we prove that both part (i) and (ii) of
	Conjecture \ref{conj:listdegree} 
	is true:
	
\begin{theorem}
	\label{th:main2}
	Let $G=G(n)$ be a graph on $n$ vertices with maximum
	degree at most $\Delta = \Delta(n)$, and $L$ 
	a random $(2,\mathcal{C})$-list assignment for $G$.
	If 
	
	\begin{itemize}
		
		\item[(i)] $\Delta = o\left(n^{1/2}\right)$ and $\sigma =
		\omega\left(n^{1/4} \Delta^{1/2}\right)$,
		or
		
		\item[(ii)]
		$\Delta= \Omega\left(n^{1/2}\right)$ and
		$\sigma=\omega\left(\Delta\right)$,
		
	\end{itemize}
	then {\bf whp} $G$ is $L$-colorable.
\end{theorem}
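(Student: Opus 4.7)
I would work throughout with the \emph{conflict graph} $H$ on $V(G)$, defined by $uv\in E(H)$ iff $uv\in E(G)$ and $L(u)\cap L(v)\neq\emptyset$. Edges of $G$ not in $H$ impose no constraint on any $L$-colouring (their endpoints have disjoint lists), so $G$ is $L$-colourable iff $H$ is. In both parts the hypotheses force $\sigma=\omega(\Delta)$: in (ii) by assumption, and in (i) because $\sigma/\Delta=\omega(n^{1/4}/\Delta^{1/2})\to\infty$ since $\Delta=o(n^{1/2})$. Because $\mathbb{P}[uv\in E(H)]\leq 4/\sigma$ for every edge $uv$ of $G$, the expected conflict-degree of every vertex is $o(1)$, and $H$ lies in the strictly subcritical bond-percolation regime.

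The core structural claim I aim to establish is: \textbf{whp} every component of $H$ is either a tree, or a unicyclic graph whose unique cycle $C$ is either (a) an even cycle on which all vertices share a common list $\{a,b\}$, or (b) contains some edge $uv$ with $|L(u)\cap L(v)|=1$. I would prove this by two first-moment computations. First, to rule out components with two independent cycles, I would bound, for each $v\geq 4$, the expected number of connected subgraphs of $G$ on $v$ vertices and at least $v+1$ edges that lie inside $H$; the number of such subgraphs in $G$ is at most $n\Delta^{v-1}\cdot\mathrm{poly}(v)$ (embed a spanning tree in $O(n\Delta^{v-1})$ ways, then pick the extra edges from the $\binom{v}{2}$ available pairs), and each lies in $H$ with probability $\leq(4/\sigma)^{v+1}$. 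Rewriting the resulting expectation as $(n/\Delta^2)(\Delta/\sigma)^{v+1}\cdot\mathrm{poly}(v)$, the $v=4$ term is $o(1)$ in both regimes---in (i) because $(\Delta^2/n)^{1/4}\to 0$ by $\Delta=o(n^{1/2})$, and in (ii) because $n/\Delta^2=O(1)$ while $\Delta/\sigma\to 0$---and the tail sums geometrically in $v$ since $\Delta/\sigma=o(1)$. Second, to forbid ``bad'' odd cycles $v_1\cdots v_\ell$ of odd length with $L(v_1)=\cdots=L(v_\ell)$, I would bound their expected count by $(n\Delta^{\ell-1}/2\ell)\binom{\sigma}{2}^{-(\ell-1)}$; the dominant $\ell=3$ term is $O(n\Delta^2/\sigma^4)=o(1)$ (using $\sigma^4=\omega(n\Delta^2)$, which in (ii) follows from $\sigma/\Delta=\omega(1)$ combined with $\Delta^2/n=\Omega(1)$), with the tail again decaying geometrically in $\ell$.

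Granted the structural claim, I would produce an $L$-colouring greedily. Trees are coloured from their leaves inward, with at most one forbidden colour (the parent's) and therefore at least one valid choice at each step. On a unicyclic component with cycle $C$ as above, I first colour $C$: in case (a) I alternate the two common colours around $C$; in case (b) I pick an edge $uv\in C$ with $L(u)\cap L(v)=\{c\}$, assign to $u$ the colour of $L(u)\setminus\{c\}$, and then colour the remaining vertices of $C$ one by one along $C\setminus\{uv\}$ from $u$ to $v$---each intermediate vertex has at most one colour forbidden by its already-coloured predecessor, and the constraint on the deleted edge $uv$ is automatically satisfied since $u$ avoids $c$. Pendant trees attached to $C$ are then coloured outward by the same greedy rule. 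The principal technical obstacle is the union bound in the first first-moment computation: since $\Delta$ may be as large as $n^{1/2-o(1)}$ in part (i), each individual term decays only sub-polynomially in $n$, so one must exploit $\Delta/\sigma\to 0$ carefully to make the geometric tail over $v$ converge uniformly in both regimes.
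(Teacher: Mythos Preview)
Your approach has a genuine gap in the first structural claim. The bound $\mathbb{P}[F\subseteq H]\le (4/\sigma)^{|E(F)|}$ is false: the edge events of $H$ are \emph{positively} correlated, not independent. For a triangle $uvw$, once $L(u)\cap L(v)\ni a$ and $L(u)\cap L(w)\ni a$ (the typical way both intersections occur), the edge $vw$ is automatically in $H$; a direct count gives $\mathbb{P}[\text{triangle}\subseteq H]=\Theta(1/\sigma^{2})$, not $\Theta(1/\sigma^{3})$. For a bowtie on five vertices (two triangles sharing a vertex) one likewise gets $\Theta(1/\sigma^{4})$, not the $\Theta(1/\sigma^{6})$ you claim.

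Worse, the structural claim itself is false in the stated regime, so no corrected bound can rescue the argument. Take $G$ to be a disjoint union of cliques $K_{\Delta+1}$ with $\Delta=n^{1/4}$ and $\sigma=n^{3/8}\log n$; this satisfies the hypotheses of part~(i). Inside one clique, for a fixed colour $a$ the number $N_a$ of vertices whose list contains $a$ is $\mathrm{Bin}(\Delta+1,2/\sigma)$, so $\mathbb{P}[N_a\ge 5]=\Theta(\Delta^{5}/\sigma^{5})$; summing over colours and using a second-moment argument gives $\mathbb{P}[\exists\,a:N_a\ge 5]=\Theta(\sigma\Delta^{5}/\sigma^{5})=\Theta(n^{-1/4}/\log^{4}n)$ for a single clique. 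Across the $\Theta(n^{3/4})$ independent cliques, this event occurs for some clique with probability $1-\exp(-\Theta(n^{1/2}/\log^{4}n))\to 1$. Five vertices sharing a colour in their lists induce a $K_5$ in $H$, so \textbf{whp} $H$ contains components with many cycles, and your tree/unicyclic dichotomy does not hold.

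The paper avoids this by working not with the conflict graph $H$ but with \emph{$L$-alternating} cycles and lollipops (Lemma~\ref{lem:2list}): along such a cycle the lists must form a chain $L(v_i)=\{c_{i-1},c_i\}$, which pins down each list completely rather than merely forcing a nonempty intersection. This is what produces the correct probability $\Theta(\sigma^{-(l+1)})$ for an $l$-vertex structure (Lemma~\ref{lem:2bad}) and makes the first-moment sum in the proof of Theorem~\ref{th:main2} converge. Your conflict-graph reduction throws away exactly the information---which colour is shared across each edge---that is needed to get the right exponent.
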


	For a complete graph $K_n$ on $n$ vertices,
	the property of being colorable
	from a random $(2,\mathcal{C})$-list assignment
	has a sharp threshold at $\sigma(n) = 2n$ \cite{Casselgren2},
	and for $k \geq 3$,  $K_n$ is {\bf whp} colorable from a random 
	$(k, \mathcal{C})$-list assignment if
	$\sigma \geq 1.223n$ \cite{Casselgren3}.
	Thus Theorem \ref{th:main2} yields a better bound on
	$\sigma(n)$ for graphs $G$ with maximum degree $o(n)$.

	In Section 2 we shall prove Theorem \ref{th:main1}
	and also give an example which shows
	that part (i) of Conjecture \ref{conj:listdegree} 
	(and therefore also Theorem \ref{th:main1}) is sharp
	in the ``coarse threshold sense'': for each integer $n$
	and each integer-valued function $\Delta = \Delta(n)$ 
	satisfying $\Delta =O(n^{\frac{1}{k^2-k}})$,
	there is a graph $H=H(n)$ with maximum degree $\Delta$
	such that if
	$\sigma(n) = o(n^{1/k^2} \Delta^{1/k})$
	and $L$
	is a random $(k,\mathcal{C})$-list assignment for $H$,
	then {\bf whp} $H$ is not $L$-colorable.
	In Section 2 we also prove some related results for
	graphs with other bounds on the maximum degree. 
	
	Section 3 contains the proof of Theorem \ref{th:main2}.
	Note also that part (ii) of the conjecture
	(and thus Theorem \ref{th:main2})
	is best possible in the same sense as Theorem \ref{th:main1},
	since a clique on $\Delta$ vertices requires at least
	$\Delta$ colors for a proper coloring.

	In Section 4 we prove some related results for graphs with
	fixed girth greater than $3$, and
	we also prove a result for graphs with girth
	$\Omega(\log  \log n)$:
	for each constant $C >0$, there are
	constants $k_0 = k_0(C)$ and $B = B(C)$, such that
	if $G$ is a graph with $n$ vertices, maximum degree at most $\Delta$,
	and girth
	$g \geq C \log \log n $,
	$k \geq k_0$ and
	$\sigma(n) > B \Delta$,
	then {\bf whp} $G$ is colorable from a random $(k,\mathcal{C})$-list
	assignment.
	(For instance when $C = 1$, then $k_0 =9$ and $B = 81 e^{7/2}$.)

	In Section 5
	we consider random list assignments where the uniform list size
	$k$ is a strictly increasing function of $n$.
	In particular, we present an analogue of Theorem \ref{th:main1}
	for lists of non-constant size, and
	we prove that for any constant $C > 0$, there is a constant $A=A(C)$,
	such that if
	$k \geq C \log n$ and $\sigma \geq A \Delta  \log n$,
	then $G$ is {\bf whp}
	colorable from a random 
	$(k, \mathcal{C})$-list assignment.


\section{Proof of Theorem \ref{th:main1}}
	In this section we prove Theorem \ref{th:main1}
	and some related results.
	Our basic method in this paper is rather similar to the proof
	of the main result of \cite{Casselgren3}, but we need to refine
	the method introduced in that paper, and use sharper estimates
	in many counting arguments.
	
	Let $H$ be a graph and $L$ a list assignment for $H$.
	If $H$ is not $L$-colorable, but removing any vertex from
	$H$ yields an $L$-colorable graph, then $H$ is \emph{$L$-vertex-critical}
	(or just \emph{$L$-critical}).
	Obviously, if $L$ is a list assignment for a graph $G$,
	and $G$ is not $L$-colorable, then $G$ contains a connected 
	induced $L$-critical subgraph.
	
	Suppose now that $H - w_1$ is $L$-colorable, where $w_1$ is some
	vertex of $H$. 
	Given an $L$-coloring $\varphi$ of $H - w_1$,
	a path $P=w_1 w_2 \dots w_t$ in $H$
	is called \emph{$(\varphi,L)$-alternating}
	if there are colors $c_2, c_3, \dots, c_t$ such that
	$\varphi(w_i) = c_i$
	and $c_i \in L(w_{i-1})$, $i=2,\dots,t$.
	We allow such a path to have length $0$ and thus only
	consist of $w_1$.
	The set of vertices
	which are adjacent to a vertex $x$ in a graph $G$ is denoted by $N_G(x)$.
	The following lemma was proved in \cite{Casselgren3}.

\begin{lemma}
\label{lem:charac}
	Let $F$ be a graph and $L$ a list assignment for $F$.
	If $F$ is $L$-critical, then
	for any
	vertex $v_1 \in V(F)$,
	$F - v_1$ has an $L$-coloring $\varphi$
	that satisfies the following conditions:

\begin{itemize}

	\item[(i)] all vertices in $F$ lie on $(\varphi,L)$-alternating
	paths with origin at $v_1$;
	
	\item[(ii)] for each color $c \in L(v_1)$,
	there is a vertex $w \in N_F(v_1)$, such that $\varphi(w) = c$.

	\item[(iii)] Define a rank function $R : V(F) \to \{0,1,\dots, |V(F)|-1\}$
	on the vertices of $F$ by setting
	$R(u) = j$ if
	a shortest $(\varphi,L)$-alternating
	path from $v_1$ to $u$ has length $j$.
	Then for every vertex $x$ of $F - v_1$ and
	every color $c \in L(x) \setminus \{ \varphi(x) \}$, there is either
	\newline
	(a) a vertex $y \in N_F(x)$ colored $\varphi(y) = c$ or
	\newline
	(b) a vertex $z \in N_F(x)$ such that $c \in L(z)$
	and $R(z) < R(x)$.
\end{itemize}

\end{lemma}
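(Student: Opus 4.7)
Since $F$ is $L$-critical, $F-v_1$ is $L$-colorable, so the set of $L$-colorings of $F-v_1$ is nonempty and finite. The plan is to pick $\varphi$ extremal within this set for a suitable potential function, and then read off properties (i)--(iii) from the extremality.

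Property (ii) requires no extremality and follows from criticality alone: if some $c \in L(v_1)$ were missed by $\varphi$ on $N_F(v_1)$, then the extension $\varphi(v_1):=c$ would be a proper $L$-coloring of $F$, contradicting that $F$ is not $L$-colorable. So (ii) holds for every $L$-coloring $\varphi$ of $F-v_1$ and we are free to impose further extremality.

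For (i), I would first pick $\varphi$ to maximize the set $T(\varphi)$ of vertices lying on $(\varphi,L)$-alternating paths from $v_1$. One may assume $F$ is connected, since each component of an $L$-critical graph is itself $L$-critical. A failure of (i) then produces an edge $sw$ of $F$ with $s \in T(\varphi)$, $w \notin T(\varphi)$, and consequently $\varphi(w) \notin L(s)$. A Kempe-chain style recoloring starting at $w$ should then produce an $L$-coloring $\varphi'$ with $T(\varphi') \supsetneq T(\varphi)$, contradicting maximality; property (ii), already known to hold, helps here by pinning down which colors appear on $N_F(v_1)$.

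For (iii), subject to maximizing $|T(\varphi)|$, impose the secondary extremal condition that $\varphi$ also minimize $\sum_{u} R_\varphi(u)$. Suppose (iii) fails at some $x \neq v_1$ and $c \in L(x) \setminus \{\varphi(x)\}$, so no neighbor of $x$ has color $c$ and every neighbor $z$ of $x$ with $c \in L(z)$ satisfies $R_\varphi(z) \geq R_\varphi(x)$. Since no neighbor of $x$ is colored $c$, setting $\varphi'(x):=c$ (and $\varphi'(u):=\varphi(u)$ elsewhere) gives a proper $L$-coloring of $F-v_1$. The rank hypothesis on the neighbors $z$ is precisely what is needed to conclude that $T(\varphi') \supseteq T(\varphi)$ while $\sum_{u} R_{\varphi'}(u) < \sum_{u} R_\varphi(u)$, contradicting the extremal choice of $\varphi$.

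I expect the main obstacle to be the rank bookkeeping in both (i) and (iii). Recoloring a single vertex $x$ can simultaneously open up new alternating paths through neighbors $z$ whose list contains the new color and close existing ones that relied on $\varphi(x)$ appearing in some neighbor's list, so the analysis must track rank changes vertex by vertex and exhibit a strict drop in the potential somewhere without an offsetting increase. Calibrating the potential (and the direction of optimization) so that the hypothesis ``$R_\varphi(z) \geq R_\varphi(x)$'' in the failure of (iii) translates cleanly into an improvement is the delicate part of the argument.
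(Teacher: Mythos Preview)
The paper does not prove this lemma; it simply cites it from \cite{Casselgren3}. So I comment on your proposal directly.

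Your argument for (ii) is correct. For (i), the extremal choice and the ``Kempe-chain style recoloring'' are unnecessary (and the latter is left vague): in fact (i) holds for \emph{every} $L$-coloring $\varphi$ of $F-v_1$. With $S=T(\varphi)$, if $S\neq V(F)$ then every edge $sw$ with $s\in S$, $w\notin S$ satisfies $\varphi(w)\notin L(s)$ (otherwise $w\in S$). Hence any $L$-coloring $\psi$ of the proper induced subgraph $F[S]$, which exists by criticality, glues with $\varphi$ on $V(F)\setminus S$ to a proper $L$-coloring of $F$, a contradiction.

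For (iii) there is a genuine gap: your potential $\sum_u R_\varphi(u)$ moves the wrong way. Suppose (iii) fails at $x$ of rank $r$ with color $c$, and set $\varphi'(x)=c$. One first checks that the balls $S_j:=\{u:R(u)\le j\}$ satisfy $S_j(\varphi')=S_j(\varphi)$ for all $j<r$ (any new short $\varphi'$-path would have to pass through $x$, and its predecessor $z$ of $x$ has $c\in L(z)$, whence $R_\varphi(z)\ge r$ by the failure of (b), a contradiction). Now every $\varphi'$-alternating path to $x$ ends in an edge $zx$ with $c\in L(z)$; the failure of (b) gives $R_\varphi(z)\ge r$, hence $z\notin S_{r-1}(\varphi)=S_{r-1}(\varphi')$, so $R_{\varphi'}(z)\ge r$ and therefore $R_{\varphi'}(x)\ge r+1$. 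Thus the rank of $x$ strictly \emph{increases}, and there is no reason for $\sum_u R(u)$ to drop. A potential that does work is the sequence $N_j(\varphi)=|S_j(\varphi)|$, minimized lexicographically over all $L$-colorings of $F-v_1$: the analysis above yields $N_j(\varphi')=N_j(\varphi)$ for $j<r$ and $N_r(\varphi')=N_r(\varphi)-1$, a strict lexicographic decrease and hence the desired contradiction.
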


	For a rank function $R$ defined as in part (iii) of
	Lemma \ref{lem:charac}, we say that
	$R$ is the rank function on $V(F)$
	\emph{induced by $L$ and $\varphi$}.
	
	Let $F$ be a connected induced subgraph of a graph $G$,
	$v_1$ a fixed vertex of $F$ and 
	$R : V(F) \to \{0,1,\dots, |V(F)|-1\}$ a rank function
	on the vertices of $F$. 
	The triple $(F,v_1,R)$
	is \emph{proper}, if $R(v_1) = 0$
	and $R(u) > 0$ for each vertex $u \in V(F) \setminus \{v_1\}$,
	and if $R(u) = s$, then there is a vertex $x \in N_F(u)$ 
	such that $R(x) = s-1$.
	(This definition of proper triple is slightly different
	from the one used in \cite {Casselgren3}.)
	We also say that $(F,v_1,R)$ is a \emph{proper triple of $G$}.
	Note that if $F,v_1$ and $R$ satisfies the conditions of
	Lemma \ref{lem:charac} for some choice of $L$ and $\varphi$, 
	then $(F,v_1,R)$ is proper.
	The next lemma gives an upper bound on the number of proper triples
	in a graph.

\begin{lemma}
\label{lem:numbersubgraphs} 
	Let $G$ be a graph on $n$ vertices whose maximum degree is
	at most $\Delta$.
	The number of proper triples $(F, v_1, R)$,
	such that $F$ is a subgraph of $G$ with $m$ vertices
	does not
	exceed $$n \Delta^{m-1} (m-1)!.$$
\end{lemma}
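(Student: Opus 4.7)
\medskip

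\noindent\textbf{Proof plan.} The plan is to encode each proper triple as a sequence that can be built up one vertex at a time, each new vertex attached to a previously chosen one, and then to enumerate such sequences directly. Given a proper triple $(F,v_1,R)$ with $|V(F)|=m$, I would first list the vertices of $F$ as $u_1=v_1,u_2,\dots,u_m$ in some order satisfying $R(u_1)\le R(u_2)\le\cdots\le R(u_m)$, breaking ties arbitrarily. The properness condition guarantees that every $u_i$ with $i\ge 2$ has at least one neighbor in $F$ of rank exactly $R(u_i)-1$; such a neighbor is necessarily of strictly smaller rank, hence appears earlier in the list, so there exists $j_i\in\{1,\dots,i-1\}$ with $u_{j_i}\in N_F(u_i)$ and $R(u_{j_i})=R(u_i)-1$. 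Choose one such $j_i$ (say, the least one), and associate to the triple the sequence $(u_1,\dots,u_m;\,j_2,\dots,j_m)$.

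This association is injective. From the sequence we read off $v_1=u_1$ and the vertex set $V(F)=\{u_1,\dots,u_m\}$, and because $F$ is an induced subgraph it is then determined. The rank function is reconstructed recursively by setting $R(u_1)=0$ and $R(u_i)=R(u_{j_i})+1$ for $i\ge 2$; since we chose $j_i$ so that $R(u_{j_i})=R(u_i)-1$ in the original triple, this reconstruction reproduces the original $R$. Hence distinct proper triples give rise to distinct encodings, and the number of proper triples is at most the number of sequences of the above form.

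Counting sequences is routine: $u_1$ can be any of the $n$ vertices of $G$, and for each $i\in\{2,\dots,m\}$ there are at most $i-1$ admissible values of $j_i$ and at most $\Delta$ choices for $u_i$ among the neighbors of $u_{j_i}$ in $G$. Multiplying yields
\[
n\prod_{i=2}^{m}(i-1)\Delta \;=\; n(m-1)!\,\Delta^{m-1},
\]
which is the asserted bound.

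The only conceptually delicate point is the setup of the encoding: one must exploit that properness supplies not merely a neighbor of smaller rank but a neighbor of rank exactly one less, and that ordering vertices by non-decreasing rank forces all candidate parents to lie strictly earlier in the enumeration. Once this is observed, the bound $n(m-1)!\,\Delta^{m-1}$ falls out, with the $(m-1)!$ factor arising from the freedom in picking the parent index $j_i$ at each step.
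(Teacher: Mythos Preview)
Your proof is correct and follows essentially the same approach as the paper's. Both arguments encode a proper triple by a rooted tree (equivalently, a sequence with parent pointers) in which each non-root vertex is attached to a previously chosen vertex of rank exactly one less, observe that the induced subgraph $F$ and the rank function $R$ are recoverable from this encoding, and then count such encodings as $n\cdot\Delta\cdot(2\Delta)\cdots((m-1)\Delta)=n\Delta^{m-1}(m-1)!$; the only cosmetic difference is that the paper phrases the encoding as a tree while you phrase it as an ordered sequence with indices $j_i$.
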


\begin{proof}
	If $(F, v_1, R)$ is a proper triple, then by removing some edges
	of $F$ we can construct a tree $T$ with root $v_1$, such that if
	$u \in V(F)$ has rank $R(u) = r$ ($1\leq r \leq m-1$),
	then $u$ is adjacent to a vertex $x$ in $T$ with rank $R(x) = r-1$,
	and $v_1$ is the unique vertex of rank $R(v_1) = 0$.
	
	Moreover, given such a tree $T$ in $G$ with root $v_1$ and
	with a rank function
	$R : V(T) \to \{0,1,\dots, m-1\}$ satisfying these conditions,
	there is a uniquely determined
	proper triple $(F,v_1, R)$ with $V(F) = V(T)$, because $F$ is an
	induced subgraph of $G$. Hence, the number of proper triples of 
	$G$ is bounded by the number of such trees in $G$ together
	with a rank function $R$.
	This latter quantity is bounded by
	$$n \Delta (2 \Delta) (3 \Delta) \dots 
	((m-1)\Delta) = n \Delta^{m-1} (m-1)!;$$
	because there are $n$ ways of selecting $v_1$, 
	then we have $\Delta$ choices
	for  a neighbor of $v_1$ as the next vertex $v_2$ of $T$;
	thereafter, there are at most $(2 \Delta)$ ways of choosing
	an edge incident with $v_1$ or $v_2$ that connects one
	of these vertices with the next vertex of $T$, etc. 
\end{proof}
	
	Given a proper triple $(F, v_1, R)$ of a graph $G$ and
	a list assignment $L$ for $G$ such that $F$ is not $L$-colorable,
	we say that
	$(F,v_1,R)$ is \emph{$L$-bad} (or just {\em bad})
	if there is an $L$-coloring $\varphi$ of $F - v_1$,
	such that $F, v_1, R, L$ and $\varphi$
	satisfy conditions (i)-(iii) of Lemma \ref{lem:charac}.
	In particular,
	$R$ is the rank function on $V(F)$ induced by $L$ and $\varphi$.
	
	Consider  a random $(k,\mathcal{C})$-list assignment for
	a graph $G$,
	where $\mathcal{C} = \{1, 2, \dots,\sigma \}$.
	The following lemma was proved in \cite{Casselgren3}.

\begin{lemma}
\label{lem:bad}
	Let $L$ be a random $(k, \mathcal{C})$-list assignment for
	a graph $G$ with maximum degree at most $\Delta$.
	If $(F, v_1, R)$ is a proper triple of $G$ with $m = |V(F)|$, then
	$$\mathbb{P}[(F,v_1,R) \text{ \emph{is $L$-bad}}] \leq
	\frac
	{\sigma^{m-1} \binom{\Delta}{k} \binom{\Delta k}{k-1}^{m-1} }
	{\binom{\sigma}{k}^m}.$$
\end{lemma}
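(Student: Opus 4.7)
The plan is to bound $\mathbb{P}[(F,v_1,R) \text{ is } L\text{-bad}]$ by counting pairs $(L,\varphi)$ in which $\varphi$ witnesses the badness of $L$, and dividing by the total number $\binom{\sigma}{k}^m$ of possible list assignments on $V(F)$. Since each bad $L$ admits at least one such witness $\varphi$, this is a valid upper bound on the number of bad $L$'s, hence on the probability.

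First I would enumerate the colorings $\varphi$ on $V(F)\setminus\{v_1\}$; there are at most $\sigma^{m-1}$ of them. Given $\varphi$, condition (ii) of Lemma~\ref{lem:charac} forces $L(v_1)$ to be a $k$-subset of $\{\varphi(w):w\in N_F(v_1)\}$, a set of size at most $\Delta$, contributing at most $\binom{\Delta}{k}$ possibilities.

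The crux will be bounding the number of admissible lists for each $x\neq v_1$, which I would handle by processing the vertices in order of increasing rank $R(x)$. Since $\varphi(x)\in L(x)$, only the $(k-1)$-set $L(x)\setminus\{\varphi(x)\}$ remains to be chosen, and by condition (iii) each color $c$ in it must be witnessed by a pair $(z,c)$ with $z\in N_F(x)$ such that either (a) $c=\varphi(z)$, or (b) $R(z)<R(x)$ and $c\in L(z)\setminus\{\varphi(z)\}$. Case (a) supplies at most $|N_F(x)|\leq\Delta$ distinct pairs (one per neighbor), while case (b) supplies at most $\Delta(k-1)$ pairs, because the lists of the lower-rank neighbors are already determined when $x$ is processed. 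Hence at most $\Delta+\Delta(k-1)=\Delta k$ colors are admissible for $L(x)\setminus\{\varphi(x)\}$, giving at most $\binom{\Delta k}{k-1}$ choices.

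Multiplying across the $m-1$ non-$v_1$ vertices yields at most $\sigma^{m-1}\binom{\Delta}{k}\binom{\Delta k}{k-1}^{m-1}$ bad pairs $(L,\varphi)$, and dividing by $\binom{\sigma}{k}^m$ produces the stated bound. The main subtlety is the combined accounting of the witnesses in (a) and (b): in case (b) one must restrict to $c\neq\varphi(z)$ to avoid double-counting with case (a), which is exactly what produces $\Delta k$ (rather than a looser $\Delta(k+1)$) and matches the binomial factor in the lemma.
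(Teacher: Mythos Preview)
The paper does not itself prove this lemma; it simply states that it was proved in \cite{Casselgren3}. Your counting argument is correct and is exactly the natural approach one would expect: fix $\varphi$ first, then $L(v_1)$ via condition (ii), then the remaining lists in order of increasing rank using condition (iii), and divide by $\binom{\sigma}{k}^m$.

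One small remark on your ``main subtlety'': when the lower-rank witness in case (b) is $z=v_1$, the color $\varphi(z)$ is undefined, so $L(v_1)$ contributes all $k$ of its colors rather than $k-1$. This is harmless, because in that situation case (a) yields at most $|N_F(x)\setminus\{v_1\}|\le\Delta-1$ colors, and the total $\Delta k$ is preserved. With that adjustment your accounting is airtight.
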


	We are now in position to prove Theorem \ref{th:main1}.
	
\begin{proof}[Proof of Theorem \ref{th:main1}.]
	Let $G=G(n)$ be a graph on $n$ vertices with maximum
	degree at most $\Delta$ and let $L$ be a random
	$(k,\mathcal{C})$-list assignment for $G$, where
	$k$ is a fixed positive integer,
	and suppose further that
	$\Delta = O(n^{\frac{k-1}{k(k^3+ 2k^2 - k +1)}})$
	and $\sigma(n) = \omega(n^{1/k^2}\Delta^{1/k})$.
	As pointed out above, the theorem is trivially true
	in the case when $k=1$. So in the following we shall
	assume that $k \geq 2$.
	We will show that {\bf whp} $G$ has no connected
	induced $L$-critical subgraph.
	This suffices for proving the theorem.
	By Lemma \ref{lem:charac}, this means that we need to to prove that
	if $\sigma(n) = \omega(n^{1/k^2}\Delta^{1/k})$, then
	{\bf whp} $G$ does not contain an $L$-bad proper triple
	$(F,v_1,R)$.
	We will use first moment calculations.
	
	If $(F,v_1, R)$ is $L$-bad, then
	$F$ has at least $k+1$ vertices.
	We first consider the case when $F$ has exactly $k+1$ vertices.
	It is not hard to see that if $F$ is not $L$-colorable
	and $|V(F)| = k+1$, then
	$F$ is a $(k+1)$-clique where all vertices
	have identical lists.
	The number of $(k+1)$-cliques in $G$ is at most $n \Delta^k$.
	Thus the expected number of $(k+1)$-cliques where the
	vertices get identical lists is at most
	
	\begin{equation}
	\label{eq:probcliques}
	n \Delta^k \binom{\sigma}{k}^{-k},
	\end{equation}
	which tends to $0$ as $n \to \infty$,
	because $\sigma(n) = \omega(n^{1/k^2} \Delta^{1/k})$.
	Hence, {\bf whp} there is no
	bad proper triple $(F,v_1, R)$ in $G$ satisfying that $|V(F)|=k+1$.
	
	Let us now consider the case when $|V(F)| \geq k+2$.
	First we show that if $(F,v_1, R)$ is bad,
	then {\bf whp} $F$ contains at most
	$\Delta^{k^2+k}$ vertices.
	Consider a path $P$ on $r$ vertices in $G$ with origin at some vertex $v$.
	The probability that
	there is an $L$-coloring $\varphi$ of $P-v$, such that $P$ is
	$(\varphi,L)$-alternating is at most
	
$$
 \frac{\sigma(\sigma-1)^{r-2}\binom{\sigma-1}{k-1}^2 	
		\binom{\sigma-2}{k-2}^{r-2}}
	{\binom{\sigma}{k}^{r}} \leq \frac{k^{2r}}{\sigma^{r-1}},
$$
	because there are at most $\sigma(\sigma-1)^{r-2}$ ways of choosing the
	proper coloring $\varphi$ and thereafter at most
	$\binom{\sigma-1}{k-1}^2 \binom{\sigma-2}{k-2}^{r-2}$ ways of choosing
	the list assignment $L$ for $P$ so that $\varphi$ is an $L$-coloring
	of $P-v$ and $P$ is $(\varphi,L)$-alternating.
	Moreover, the number of distinct paths in $G$ on $r$ vertices
	is at most $n \Delta^{r-1}$.
	Therefore,
	the expected number of paths $P$ in $G$
	on at least
	$k^2+k+1$ vertices, for which there is an $L$-coloring $\varphi$ of $P-v$
	such that $P$ is $(\varphi, L)$-alternating is at most
	
	\begin{equation}
\label{eq:pathsum}
	 \sum_{r=k^2+k+1}^{n} \frac{n \Delta^{r-1}k^{2r}}{\sigma^{r-1}}
	= o(1),
	\end{equation}
	since $\Delta = O(n^{\frac{k-1}{k(k^3+ 2k^2 - k +1)}})$ and
	$\sigma(n) = \omega(n^{1/k^2} \Delta^{1/k})$. Hence, by Markov's inequality,
	{\bf whp} there is no
	$L$-coloring $\varphi$ of a subgraph of $G$
	such that $G$ contains a
	$(\varphi, L)$-alternating path of length	$k^2+k+1$.
	
	Now, by Lemma \ref{lem:charac}, if $F$ is a subgraph of $G$ that belongs
	to a bad proper triple $(F,v_1, R)$,
	then there is an $L$-coloring $\varphi$ of $F-v_1$
	such that all vertices of
	$F$ lie on $(\varphi,L)$-alternating paths with origin at $v_1$.
	Since {\bf whp}
	the maximum length of such a path in $G$ is at most $k^2+k$, the
	maximum number of vertices in a subgraph of $G$ that 
	is in a bad proper triple is {\bf whp} at most
	\[ 1 + \Delta + \Delta^2 + \dots + \Delta^{k^2+k-1} \leq \Delta^{k^2+k}. \]

	Let $X_m$ be a random variable counting the number
	of bad proper triples $(F,v_1, R)$ in $G$ such that $F$ has $m$ vertices
	and set $$X = \sum_{m=k+2}^{\Delta^{k^2+k}} X_m.$$	
	Lemma \ref{lem:bad} gives an upper bound on the probability
	that a given proper triple of $G$ on $m$ vertices 
	is bad.
	Additionally, by Lemma \ref{lem:numbersubgraphs},
	$$f(m) \leq n \Delta^{m-1}(m-1)!,$$ where
	$f(m)$ is the number of proper triples $(F, v_1, R)$ in $G$
	such that	$F$ has $m$ vertices.
	Let $p_m$ be the least number
	such that $$\mathbb{P}[(F, v_1, R) \text{ is $L$-bad}] \leq p_m,$$
	whenever $(F, v_1, R)$ is a proper triple in $G$
	and $F$ has $m$ vertices.
	Since such a subgraph $F$ {\bf whp}
	has at most $\Delta^{k^2+k}$
	vertices if $(F, v_1, R)$ is $L$-bad, we conclude from
	Lemmas
 	\ref{lem:numbersubgraphs} and \ref{lem:bad} that
	
\begin{align}
	\mathbb{P[\text{$G$ contains an $L$-bad proper triple}]} & \leq
	\mathbb{E}[X] + o(1) \nonumber \\
	&
	\leq \sum_{m=k+2}^{\Delta^{k^2+k}} f(m) p_m + o(1)
	\nonumber \\
	&
	\leq \sum_{m=k+2}^{\Delta^{k^2+k}} 
	n \Delta^{2m-2}(m-1)!
	\frac
	{\sigma^{m-1} \binom{\Delta}{k} \binom{\Delta k}{k-1}^{m-1} }
	{\binom{\sigma}{k}^m} + o(1)
	\nonumber \\
	&	=O\left(\frac{n}{\sigma \Delta}\right) \sum_{m=k+2}^{\Delta^{k^2+k}} 
	 \Big(\frac{m k^k \Delta^{k}}
	{\sigma^{k-1}} \Big)^m
	\nonumber \\
	&
	= O \Big( \frac{n \Delta^{k(k+2)}}{\sigma^{(k-1)(k+2)+1}} \Big)
	\sum_{m=0}^{\infty} \Big(\frac{k^{k} \Delta^{k^2 + 2k}}
	{\sigma^{k-1}} \Big)^m
	\nonumber \\
	& = o(1), \nonumber 
\end{align}
	provided that $\sigma(n) = \omega(n^{1/k^2}\Delta^{1/k})$,
	$\Delta = O(n^{\frac{k-1}{k(k^3+ 2k^2 -k+1)}})$ and $k \geq 2$.
\end{proof}

	We now show that the bound on $\sigma$ in 
	part (i) of Conjecture \ref{conj:listdegree}
	(and also Theorem \ref{th:main1}) is best possible
	in the ``coarse threshold sense''.
	We will show that for positive integers $k \geq 2$ and 
	$n \geq k+1$ (large enough),
	and each non-constant increasing integer-valued
	function $\Delta= O(n^{\frac{1}{k^2-k}})$,
	there is a graph $H = H(n, \Delta)$ with
	$n$ vertices and maximum degree $\Delta$
	such that if $\sigma(n) = o(n^{1/k^2}\Delta^{1/k})$
	and $L$ is a random $(k,\mathcal{C})$-list assignment
	for $H$, then {\bf whp} $H$ is not $L$-colorable.
	
	So fix $k$, and let $n$ be a positive integer satisfying
	$n \geq k+1$ and assume that $\Delta$ satisfies
	$\Delta= O(n^{\frac{1}{k^2-k}})$.
	We set
	$n_{\Delta} = \lfloor \frac{n}{\Delta+1} \rfloor$,
	and let $H$ be a graph on $n$ vertices which is 
	the disjoint union of $n_{\Delta}$
	complete graphs, each of which has $\Delta+1$ vertices,
	and possibly some isolated vertices.
	Let $J_1, \dots, J_{n_{\Delta}}$ be the non-trivial components of $H$
	and let $L$ be a random $(k,\mathcal{C})$-list assignment
	for $H$.
	We will prove that 
	{\bf whp} there is at least one $(k+1)$-clique in $H$
	where all vertices have identical lists, which means
	that {\bf whp} $H$ is not $L$-colorable.
	Note that we may assume that $\Delta \leq \sigma$, since
	otherwise, trivially there is no $L$-coloring of $H$.
	
	Let $X$
	be a random variable counting the number of $(k+1)$-cliques
	in $H$ where all vertices have identical lists. Then

\begin{equation}	
\label{eq:expect}
\mathbb{E}[X] = \left \lfloor \frac{n}{\Delta+1} \right\rfloor
	\binom{\Delta+1}{k+1} \binom{\sigma}{k}^{-k} 
	=\Theta(n (\Delta+1)^k \sigma^{-k^2}).
\end{equation}

	To prove that $\mathbb{P}[X > 0] = 1 - o(1)$ we use the 
	second moment method with the 
	inequality due to Chebyshev in the following form:

\begin{equation}
\label{eq:chebyshev}
	\mathbb{P}[Y=0]	\leq \frac{\text{Var}[Y]}{\mathbb{E}[Y]^2},
\end{equation}
	valid for all non-negative random variables $Y$. 
	Since $X$ is a sum of indicator random variables, 
	we can use the following approach from 
	\cite{AlonSpencer}.
	
	Let $X = X_1 + \dots + X_d$, where each $X_i$ is the indicator
	random variable for the event that
	the vertices of a $(k+1)$-clique gets identical lists.
	Let $A_i$ be the event corresponding to
	$X_i$, that is, $X_i=1$ if $A_i$ occurs and $X_i=0$ otherwise.
	For indices $i,j$ we write $i \sim j$ 
	if $i \neq j$ and the events $A_i, A_j$ are not independent.
	Set
	
\begin{equation*}
	\Pi = \sum_{i \sim j} \mathbb{P}[A_i \wedge A_j].
\end{equation*}	
	When $i \sim j$, we have

\begin{equation*}
	\text{Cov}[X_i, X_j]= 
	\mathbb{E}[X_iX_j] - \mathbb{E}[X_i] \mathbb{E}[X_j] \leq
	\mathbb{E}[X_iX_j]= \mathbb{P}[A_i \wedge A_j]
\end{equation*}	
\medskip
	and when $i \neq j$ and not $i \sim j$ then Cov$[X_i,X_j]=0$. Thus
\begin{equation*}
	\text{Var}[X]\leq \mathbb{E}[X] + \Pi
\end{equation*}
	and the following proposition follows from \eqref{eq:chebyshev}.

\begin{claim}
\label{cl:cheb}
	
	If $\mathbb{E}[X] \rightarrow \infty$ 
	and $\Pi = o(\mathbb{E}[X]^2)$, then $\mathbb{P}[X > 0] = 1 - o(1)$.
	
\end{claim}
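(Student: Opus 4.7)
My plan is a one-step application of Chebyshev's inequality in the form (\ref{eq:chebyshev}) to the random variable $X$, combined with the variance bound that has already been derived in the discussion preceding the claim. That discussion shows, using $\text{Cov}[X_i, X_j] \leq \mathbb{P}[A_i \wedge A_j]$ when $i \sim j$ and $\text{Cov}[X_i, X_j] = 0$ otherwise, that $\text{Var}[X] \leq \mathbb{E}[X] + \Pi$. So essentially all the substantive work has already been done; what remains is to package it.

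Concretely, I would apply (\ref{eq:chebyshev}) with $Y = X$ and insert the variance bound, obtaining
\[
\mathbb{P}[X = 0] \;\leq\; \frac{\text{Var}[X]}{\mathbb{E}[X]^2} \;\leq\; \frac{\mathbb{E}[X] + \Pi}{\mathbb{E}[X]^2} \;=\; \frac{1}{\mathbb{E}[X]} + \frac{\Pi}{\mathbb{E}[X]^2}.
\]
Under the hypothesis $\mathbb{E}[X] \to \infty$ the first term is $o(1)$, and under the hypothesis $\Pi = o(\mathbb{E}[X]^2)$ the second term is $o(1)$. Hence $\mathbb{P}[X = 0] = o(1)$, equivalently $\mathbb{P}[X > 0] = 1 - o(1)$, which is the conclusion of the claim.

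There is no serious obstacle to the claim itself; it is the standard abstract form of the second moment method, as in \cite{AlonSpencer}. The real work will come afterwards, in applying the claim to the particular $X$ counting monochromatic $(k+1)$-cliques in the graph $H$ described just before the claim: one has to verify that $\mathbb{E}[X] \to \infty$ (which, in view of (\ref{eq:expect}), follows from $\sigma(n) = o(n^{1/k^2}\Delta^{1/k})$ together with the growth condition on $\Delta$), and that $\Pi = o(\mathbb{E}[X]^2)$. The latter will require splitting pairs of dependent $(k+1)$-cliques in $H$ according to the size of their intersection and bounding $\mathbb{P}[A_i \wedge A_j]$ accordingly; that is where any genuine combinatorial effort will be spent, not in the abstract claim proved above.
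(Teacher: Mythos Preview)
Your proposal is correct and matches the paper's approach exactly: the paper simply notes that the claim follows from \eqref{eq:chebyshev} together with the variance bound $\text{Var}[X]\le \mathbb{E}[X]+\Pi$ derived just before it, and your write-up spells out precisely that one-line computation.
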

	It is clear from \eqref{eq:expect} that 
	$\mathbb{E}[X] \rightarrow \infty$
	if $\sigma(n) = o(n^{1/k^2}\Delta^{1/k})$. 
	We now show that the second
	criterion of Claim \ref{cl:cheb} is satisfied.
	If $i \sim j$, then clearly $A_i$ and $A_j$
	are events for cliques which are in the same component of
	$H$, and all vertices in these cliques have identical lists. 
	Since $H$ has $n_\Delta$ components
	and two distinct $(k+1)$-cliques have at most $k$ vertices
	in common we have
	
	$$ \Pi = O\left(n_\Delta\right) \sum_{l=1}^{k}
	\binom{\Delta+1}{k+1+l} \binom{\sigma}{k}^{-(k+l)}
	= O\left( n(\Delta+1)^{k+2} \sigma^{-k(k+1)}\right),$$
	and thus $\Pi = o(\mathbb{E}[X]^2)$ as required.
	We conclude that Theorem \ref{th:main1} is best possible
	in the ``coarse threshold sense''.

\bigskip
\bigskip

	Next, we will prove the following two propositions
	which show that weaker versions of
	Conjecture \ref{conj:listdegree} hold
	for larger $\Delta$.
	As usual, $G=G(n)$ is a graph on $n$ vertices with maximum
	degree at most $\Delta = \Delta(n)$, and 
	$L$ is a random $(k,\mathcal{C})$-list assignment for $G$,
	where $k \geq 2$ is a fixed positive integer.

\begin{proposition}
\label{prop1}
	Suppose that
	$\alpha$ and
	$s$ are constants satisfying $1 \leq \alpha \leq 3$ and
	and $s \geq 2 + \frac{2}{k-1}$,
	$\Delta = O(n^{1/k^{\alpha}})$, and
	$\sigma(n) = \omega\left(n^{1/k^{\frac{\alpha+1}{2}}} 
	\Delta^s \right)$.
	Then {\bf whp} $G$ is $L$-colorable.
\end{proposition}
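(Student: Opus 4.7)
The plan is to mirror the proof of Theorem~\ref{th:main1}, but with an \emph{adaptive} alternating-path threshold $C = C(n, \Delta)$. Abbreviate $\beta = (\alpha+1)/2$, so the hypotheses read $\sigma = \omega(n^{1/k^\beta}\Delta^s)$, $\Delta = O(n^{1/k^{2\beta-1}})$, $\beta \in [1,2]$, and $s \geq 2 + 2/(k-1)$. By Lemma~\ref{lem:charac} it suffices to show \textbf{whp} $G$ contains no $L$-bad proper triple. The case $|V(F)| = k+1$ forces $F$ to be a $(k+1)$-clique with identical lists (as in the proof of Theorem~\ref{th:main1}), of expected count $O(n\Delta^k/\binom{\sigma}{k}^k) = o(1)$ under the hypothesis, using the $\omega$-slack when $\beta = 2$.

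For $|V(F)| \geq k+2$, bound the length of $(\varphi,L)$-alternating paths: the expected number of paths on $r$ vertices admitting such a coloring is at most $n\Delta^{r-1}k^{2r}/\sigma^{r-1}$. Choose $C$ as the smallest integer making the tail $\sum_{r \geq C+1}$ of this count $o(1)$; by the hypothesis one obtains $C \leq \lceil 1/(1/k^\beta + (s-1)\log\Delta/\log n)\rceil + 1$, which is at most $\lceil k^\beta\rceil + 1$ uniformly but substantially smaller when $\Delta$ is close to its maximum (thanks to the $\Delta^{s-1}$ factor inside $\sigma/\Delta$). Setting $M = \Delta^C$, \textbf{whp} every bad $F$ satisfies $|V(F)| \leq M$; then by Lemmas~\ref{lem:numbersubgraphs} and~\ref{lem:bad},
\[
\mathbb{E}[X] \leq \sum_{m=k+2}^{M} n\Delta^{m-1}(m-1)! \cdot \frac{\sigma^{m-1}\binom{\Delta}{k}\binom{\Delta k}{k-1}^{m-1}}{\binom{\sigma}{k}^m} = O\!\left(\frac{n\Delta^{(C+k)(k+2)}}{\sigma^{k^2+k-1}}\right),
\]
provided the ratio $MK_0 := k^{O(k)}\Delta^{C+k}/\sigma^{k-1} \to 0$, so that the geometric inner sum collapses to its leading term.

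The main obstacle is the algebraic balancing: verifying both $MK_0 \to 0$ and the final $o(1)$ bound from the hypotheses. Writing $u = \log\Delta/\log n \in [0, 1/k^{2\beta-1}]$ and substituting the adaptive $C$, $\log(MK_0)/\log n$ becomes a rational function of $u$ that is strictly negative on the whole range precisely when $(k-1)(k^{\beta-1}+s) \geq C+k$; the worst case $\beta = 2$ at $u = 1/k^{2\beta-1}$ (i.e., $\Delta \approx n^{1/k^3}$) simplifies this to $s \geq 2k/(k-1) = 2 + 2/(k-1)$, matching the hypothesis exactly, with the $\omega$ notation supplying the strict margin. A parallel calculation then shows $s(k^2+k-1) \geq (C+k)(k+2)$, which combined with the positive $n$-exponent $(k^2+k-1)/k^\beta - 1 \geq (k-1)/k^2 > 0$ gives $\mathbb{E}[X] = o(1)$, completing the proof.
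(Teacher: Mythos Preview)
Your approach follows the same template as the paper's proof—mirror Theorem~\ref{th:main1}, bound alternating paths, then apply Lemmas~\ref{lem:numbersubgraphs} and~\ref{lem:bad}. The paper, however, uses a \emph{fixed} path threshold $C = k^{(\alpha+1)/2}$ rather than your adaptive one, and verifies that the geometric ratio tends to zero by a clean two-factor decomposition: setting $\sigma = n^{1/k^{(\alpha+1)/2}}\Delta^s$, the ratio in the analogue of \eqref{eq:summand} factors as
\[
\frac{\Delta^{k}\Delta^s}{\Delta^{(s-1)k}} \cdot \frac{\Delta^{k^{(\alpha+1)/2}}\, k^k\, n^{1/k^{(\alpha+1)/2}}}{n^{1/k^{(\alpha-1)/2}}\,\Delta^k},
\]
the first factor bounded exactly by $sk \geq 2k+s$ (equivalently $s \geq 2+2/(k-1)$), the second by $\alpha \leq 3$ after substituting the maximal $\Delta = \Theta(n^{1/k^\alpha})$. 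This separates the roles of $s$ and $\alpha$ transparently and avoids any optimisation over $u$.

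Your adaptive $C$ introduces a genuine gap: the assertion that the worst case for $\log(MK_0)/\log n$ occurs at $u = 1/k^{2\beta-1}$ is not justified and is in fact false in general. Writing the relevant exponent as $f(u) = C(u)u + (k-(k-1)s)u$ with $C(u) \approx 1/(1/k^\beta + (s-1)u)$, the first summand is increasing in $u$ while the second is decreasing (since $(k-1)s \geq 2k > k$), so $f$ can have an interior maximum on $[0,1/k^{2\beta-1}]$; for instance with $k=2$, $\beta=2$, $s=4$ the maximum is near $u\approx 0.035$, strictly inside $[0,1/8]$. Your condition $(k-1)(k^{\beta-1}+s) \geq C+k$ only records the right endpoint. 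The conclusion does appear to survive at the interior maximum in the cases I checked, but you have not shown this, and the ``rational function that is strictly negative on the whole range'' and the ``parallel calculation'' for the leading term are asserted rather than carried out. Since your adaptive $C$ never exceeds the paper's fixed $k^{(\alpha+1)/2}$, the cleanest repair is simply to drop the adaptivity: take $C = k^{(\alpha+1)/2}$, after which the two-factor argument above closes both required estimates immediately.
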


\begin{proposition}
\label{prop2}
	If $\Delta = \Omega(n^{1/k})$ and
	$\sigma(n) = \omega(n^{\frac{1}{k}} \Delta)$,
	then {\bf whp} $G$ is $L$-colorable.
\end{proposition}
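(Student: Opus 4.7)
My plan is to bypass the bad-proper-triple machinery of Theorem \ref{th:main1} and exhibit an explicit $L$-coloring directly. For each vertex $v$ of $G$, call a color $c \in L(v)$ \emph{isolated at $v$} if $c \notin L(u)$ for every $u \in N_G(v)$. The opening observation is that if every vertex has an isolated color, then setting $\varphi(v)$ to be an isolated color of $v$ automatically yields an $L$-coloring: for any edge $uv$, $\varphi(v) \in L(v)$ lies outside $L(u)$ while $\varphi(u) \in L(u)$, so $\varphi(u) \neq \varphi(v)$.

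The task thus reduces to showing that whp every vertex has at least one isolated color. Fix $v$ and write $S_v = \bigcup_{u \in N_G(v)} L(u)$; then the event ``$v$ has no isolated color'' is precisely $L(v) \subseteq S_v$. Since $L(v)$ is chosen independently of the lists $\{L(u) : u \in N_G(v)\}$, conditioning on $S_v$ gives
\[
\mathbb{P}[L(v) \subseteq S_v \mid S_v] = \binom{|S_v|}{k}\Big/\binom{\sigma}{k}.
\]
Using $|S_v| \leq k\Delta$ together with the elementary inequality $\binom{k\Delta}{k}/\binom{\sigma}{k} \leq (2k\Delta/\sigma)^k$ (valid once $\sigma \geq 2k$, which holds for $n$ large), this yields the pointwise bound $\mathbb{P}[v \text{ has no isolated color}] \leq (2k\Delta/\sigma)^k$.

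Summing over the $n$ vertices, the expected number of vertices with no isolated color is at most $n(2k\Delta/\sigma)^k = (2k)^k(n^{1/k}\Delta/\sigma)^k$, which is $o(1)$ under the hypothesis $\sigma = \omega(n^{1/k}\Delta)$ since $k$ is fixed. Markov's inequality then shows that whp every vertex has an isolated color, and the opening observation produces an $L$-coloring of $G$. I expect essentially no obstacle: the only delicate point is the independence of $L(v)$ from $S_v$, which is immediate from the definition of a random $(k,\mathcal{C})$-list assignment. Note that this argument does not use the lower bound $\Delta = \Omega(n^{1/k})$ at all, so in fact the single hypothesis $\sigma = \omega(n^{1/k}\Delta)$ already suffices for the conclusion.
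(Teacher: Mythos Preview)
Your proof is correct and is essentially the same as the paper's. The paper's argument, phrased via Lemma~\ref{lem:charac}, shows that \textbf{whp} there is no vertex $v$ whose every list color is ``covered'' by some neighbor's list; this is exactly the complement of your event ``every vertex has an isolated color,'' and both proofs finish with the same first-moment bound $O(n\Delta^k/\sigma^k)$ and Markov's inequality. Your presentation is slightly more self-contained in that you exhibit the $L$-coloring directly rather than invoking Lemma~\ref{lem:charac}, and your observation that the hypothesis $\Delta = \Omega(n^{1/k})$ is never used is also correct (it is present only to delimit the range in Conjecture~\ref{conj:listdegree}).
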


	\bigskip

	We first prove Proposition \ref{prop1}.
	The proof 
	is similar to the proof of Theorem \ref{th:main1}
	and therefore the proof will not be given in full detail.

\begin{proof}[Proof of Proposition \ref{prop1} (sketch).]
		
	Let $G = G(n)$ be a graph on $n$ vertices with
	maximum degree at most $\Delta = O(n^{1/k^\alpha})$, where
	$\alpha$ is a constant satisfying $1 \leq \alpha \leq 3$,
	and $k \geq 2$ is a fixed positive integer.
	Assume further that $s$ is a constant satisfying 
	$s \geq 2+ \frac{2}{k-1}$, and 
	$\sigma = \omega\left(n^{1/k^{\frac{\alpha+1}{2}}} \Delta^s \right)$.
	Note that the condition on $s$ implies that 
	\begin{equation}
	\label{eq:s}
		sk \geq 2k+s.
	\end{equation}
	
	As in the proof of Theorem \ref{th:main1},
	we shall prove that $G$ {\bf whp} has no bad
	proper triple.
	
	Proceeding as in the proof of Theorem \ref{th:main1},
	one may first deduce that
	any $(\varphi,L)$-alternating path
	in $G$ {\bf whp} has at most $k^{(\alpha+1)/2}$ 
	vertices,
	and thus if $(F,v,R)$ is a bad proper triple in $G$
	then {\bf whp } $F$ has at most 
	$\Delta^{k^{(\alpha+1)/2}}$ vertices.
	Using Lemmas \ref{lem:numbersubgraphs}
	and \ref{lem:bad} it now follows, as in the proof of
	Theorem \ref{th:main1}, that
	the probability that $G$ has a bad proper triple tends to zero
	if the sum
	$$
	O\left(\frac{n}{\sigma \Delta}\right) 
	\sum_{m=k+1}^{\Delta^{k^{\frac{\alpha+1}{2}}}} 
	 \Big(\frac{m k^k \Delta^{k}}
	{\sigma^{k-1}} \Big)^m
	$$
	tends to $0$ as $n \to \infty$.
	Rewriting this sum yields that it is at most
	\begin{equation}
	\label{eq:summand}
		O\left(
		\frac{n\Delta^{k(k+1)}} {\sigma^{k^2} \Delta}
		\right)
		\sum_{m=0}^{\infty} \left(
		\frac{\Delta^{k^{\frac{\alpha+1}{2}}} k^k \Delta^{k} \sigma}{\sigma^{k}}
		\right)^m.
	\end{equation}
	Setting $\sigma = n^{1/k^{\frac{\alpha+1}{2} }} \Delta^s$,
	we have that the ratio of the geometric sum in \eqref{eq:summand} is at most
	$$
	\frac{\Delta^{k} \Delta^s}{\Delta^{(s-1)k}}
	\frac{
	 \Delta^{k^{\frac{\alpha+1}{2}}} k^k n^{1/k^{\frac{\alpha+1}{2} }}} 
	{n^{1/k^{\frac{\alpha-1}{2} }} \Delta^k}.
	$$
	The first factor in this expression is bounded by \eqref{eq:s}. As regards
	the second factor, this quantity is maximum when
	$\Delta = \Theta(n^{1/k^\alpha})$, and using the 
	fact that  $\alpha -1 \leq \alpha/2 + 1/2$,
	when $\alpha \leq 3$, it
	follows that this factor is bounded as well.
	
	It follows that if 
	$\sigma = \omega\left(n^{1/k^{\frac{\alpha+1}{2}}} \Delta^s\right)$,
	then the ratio of the geometric sum in \eqref{eq:summand}
	tends to zero as $n \to \infty$, 
	and using this fact it is straightforward to verify that
	the expression \eqref{eq:summand} tends to zero.

\end{proof}

Let us now prove Proposition \ref{prop2}.

\begin{proof}[Proof of Proposition \ref{prop2} (sketch)]
	Let $k \geq 2$ be a positive integer and
	$G=G(n)$ be a graph on $n$ vertices with maximum degree
	at most $\Delta = \Omega(n^{1/k})$.
	We have to prove that
	if 
	$\sigma(n) = \omega(n^{1/k} \Delta)$ and
	$L$ is a random $(k,\mathcal{C})$-list assignment for $G$,
	then {\bf whp} $G$ is $L$-colorable.
	
	We will show that {\bf whp} $G$ contains no connected induced
	$L$-critical subgraph. 
	By Lemma \ref{lem:charac} it suffices to prove
	that {\bf whp} $G$ contains no vertex $v$ such that
	there are $k$ neighbors $u_1,\dots, u_k$ of $v$
	such that $c_i \in L(u_i)$, for $i=1,\dots,k$,
	where $L(v) =\{c_1, \dots, c_k\}$.
	The expected number of such vertices $v$ in $G$ is at most
	
	$$
	n \binom{\Delta}{k}\frac{\binom{\sigma}{k} k! \binom{\sigma-1}{k-1}^k}
	{\binom{\sigma}{k}^{k+1}} = O\left(\frac{n \Delta^k}{\sigma^k}   \right),
	$$
	which tends to $0$ as $n \to \infty$, so the desired result
	follows from Markov's inequality.
	\end{proof}


\section{Lists of size $2$}
	
	In this section we prove Theorem \ref{th:main2}.
	The proof is
	not very different from the proofs
	in the preceding section, but we use a somewhat
	different technique
	employed in \cite{Casselgren2} for proving
	results on random $(2,\mathcal{C})$-list assignments. Below we
	introduce some terminology and auxiliary results
	from that paper.
	
	Let $P = v_1 e_1 v_2 \dots e_{d-1} v_d$ be a path.
	Then the sequence $C = v_1 e_1 v_2 \dots v_d e_d v_1$ is an
	{\em ordered cycle} if $e_d = v_d v_1$.
	Similarly, the sequence $D = v_1 e_1 v_2 \dots v_d e_d v_j$
	is called an {\em ordered lollipop} if $e_d = v_d v_j$
	and $j \in \{2, \dots , d-2\}$.
	Note that an ordered cycle $C$ and an ordered lollipop $D$ 
	is uniquely determined by a sequence of 
	vertices (as is also a path).
	We may thus write $C = v_1 \dots v_d v_1$ for ordered cycles $C$, and
	similarly $D = u_1 \dots u_j \dots u_d u_j$, 
	for ordered lollipops $D$. 
	Paths, ordered cycles and ordered lollipops will usually be referred 
	to as sequences of vertices; however, sometimes
	we will refer to such sequences as graphs and then 
	mean the graph consisting of the vertices and edges of the sequence. 
	In particular, if $C$ 
	is an ordered cycle or lollipop, then
	$V(C)$ and $E(C)$ are the sets 
	of all vertices and edges in $C$, respectively.

	Let $L$ be a $2$-list assignment 
	for a graph $G$ and let $C = v_1 \dots v_d v_1$ be an ordered cycle of $G$. 
	Suppose that there are colors $c_1, \dots, c_{d-1}$ 
	such that $c_1 \in L(v_1)$, 
	$L(v_i) = \{c_{i-1}, c_i\}$, $i = 2, \dots , d - 1$ and
	$L(v_d) = \{c_{d-1}, c_1\}$. 
	Then $C$ is {\em $L$-alternating}. Similarly, an ordered lollipop 
	$D = u_1 \dots u_j \dots u_d u_j$ in $G$ is {\em $L$-alternating}
	if there are colors $c_1, \dots , c_{d-1}$ such that $c_1 \in L(u_1)$ 
	and $L(u_i) = \{c_{i-1}, c_i\}$, $i = 2, \dots , d-1$
	and $L(u_d) = \{c_{d-1}, c_j\}$.
	For an $L$-alternating ordered cycle or ordered lollipop $D$, 
	the common color $c_1$ of the lists of the
	first two vertices of $D$ will be referred to as the 
	{\em first color of $D$.}

	The following lemma was proved in \cite{Casselgren2}.
	
\begin{lemma}
\label{lem:2list}	
	Let $G$ be a graph and $L$ 
	a $2$-list assignment for $G$. If $G$ is not $L$-colorable, 
	then there are subgraphs $H_1$ and $H_2$ of G, such that for $i = 1, 2$:

\begin{itemize}
	
	\item[(i)] $H_i$ is either an $L$-alternating ordered cycle or an 
	$L$-alternating ordered lollipop;
	
	\item[(ii)] there is a vertex $v$ of $G$ with $L(v) = \{c_1, c_2\}$, 
	such that the first vertex of $H_i$ is $v$ and the 
	first color is $c_i$;
	
	\item[(iii)] the second vertex of $H_1$ and the second vertex of $H_2$ are distinct.

\end{itemize}
\end{lemma}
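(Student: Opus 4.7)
The plan is to reduce to an $L$-critical subgraph $G' \subseteq G$ (i.e.\ a minimal induced subgraph that is still not $L$-colorable) and then, for a suitably chosen vertex $v \in V(G')$ with $L(v) = \{c_1,c_2\}$, to construct each $H_i$ by a deterministic propagation procedure. Starting from $v$ and declaring its color to be $c_i$, I would walk to a neighbor of $v$ whose list contains $c_i$, then extend by repeatedly choosing, from the current vertex $w_t$ with current color $c$, a neighbor $w_{t+1}$ whose list contains $c$; the next "current color" is then the other color in $L(w_{t+1})$. By construction, every initial segment of such a walk is $L$-alternating in exactly the sense required by the definitions of $L$-alternating ordered cycle and ordered lollipop.

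Since $G'$ is finite, the walk must eventually either (a) close by returning to $v$ (producing an $L$-alternating ordered cycle $H_i$ with first color $c_i$), or (b) revisit some earlier interior vertex $w_j$ with $2\le j\le d-2$ in the walk, via an edge whose list contains the color currently being propagated (producing an $L$-alternating ordered lollipop $H_i$), or (c) get stuck at some $w_t$ because no unvisited neighbor has the right color in its list. The crucial point is that case (c) is impossible: one can then fix the propagated colors along $v, w_2,\dots,w_t$ and, using an $L$-coloring of $G' - \{v, w_2, \dots, w_t\}$ (which exists by criticality), assemble an $L$-coloring of all of $G'$, contradicting non-$L$-colorability. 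Thus for each $i \in \{1,2\}$ at least one $H_i$ of the required form exists.

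The main obstacle is condition (iii), that the second vertices of $H_1$ and $H_2$ can be taken to be distinct. The second vertex of $H_i$ must be a neighbor $u$ of $v$ with $c_i \in L(u)$, and a priori both walks might be forced to use the same neighbor $u$, in which case $L(u) = \{c_1,c_2\} = L(v)$. To handle this, I would argue that among all choices of $v \in V(G')$ there must exist one for which (iii) can be achieved: if no such $v$ exists, then every vertex of $G'$ has exactly one neighbor carrying the "dual" list, and tracing these pairs yields either an $L$-colorable component (contradicting criticality) or a degenerate structure (e.g.\ a single edge with matching lists) that is trivially $L$-colorable. Equivalently, one may pass to the 2-SAT implication digraph associated with $L$, where non-$L$-colorability forces some variable and its negation to lie in the same strongly connected component; picking $v$ to correspond to such a variable guarantees two distinct out-neighbors in the digraph, which translate to the two $H_i$ with distinct second vertices.
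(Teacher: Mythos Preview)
The paper does not actually prove this lemma here; it is quoted from \cite{Casselgren2} (see the sentence immediately preceding the statement), so there is no proof in the present paper to compare your attempt against.

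On the merits of your proposal: the propagation-walk idea (equivalently, walking in the 2-SAT implication digraph, as you note at the end) is the correct framework. However, your argument that case~(c) is impossible has a real gap. You propose to colour $v,w_2,\dots,w_t$ with the propagated colours $c_1,c_2,\dots,c_t$ and then glue on an arbitrary $L$-colouring of $G'-\{v,w_2,\dots,w_t\}$; but nothing prevents an off-path neighbour $x$ of some interior $w_j$ from receiving colour $c_j$ in that colouring, so the two pieces need not fit together. The clean reason the walk cannot get stuck is different: in an $L$-critical graph, for every vertex $w$ and every $c\in L(w)$ some neighbour of $w$ has $c$ in its list (otherwise an $L$-colouring of $G'-w$ extends by giving $w$ colour~$c$), so every node of the implication digraph has an out-neighbour. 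A second point you do not address is that when the walk first revisits a vertex $w_j$, you must land there via the colour $c_j$ rather than $c_{j-1}$; otherwise the lollipop condition $L(u_d)=\{c_{d-1},c_j\}$ fails. This is not automatic from your description and needs a short extra argument (for instance, taking the walk to have distinct \emph{nodes} in the implication digraph). Finally, your treatment of~(iii) is too loose; the direct argument is that if $v$ had a unique neighbour $u$ whose list meets $L(v)$, then $L(u)=L(v)$ and any $L$-colouring of $G'-v$ extends to $v$ by giving $v$ the colour in $\{c_1,c_2\}$ not used on $u$, contradicting criticality.
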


	Consider a graph $G$ with a $2$-list assignment $L$.
	The pair $F = (H_1,H_2)$ is then called a
	\emph{proper pair}
	if $H_1$ is an ordered cycle or ordered lollipop in $G$,
	$H_2$ is an ordered cycle or ordered lollipop in $G$ 
	and $H_1$ and $H_2$ have a common first vertex.
	Moreover, a proper pair $F = (H_1,H_2)$
	is \emph{$(L,2)$-bad} (or just {\em $2$-bad})
	if $H_1$,
	$H_2$ and $L$ satisfy
	the conditions (i)-(iii) of Lemma \ref{lem:2list}.
	Note that if $F=(H_1, H_2)$ is a $2$-bad proper pair, then $F$
	is not $L$-colorable.
	
	By slight abuse of terminology we will sometimes refer to
	proper pairs $F=(H_1, H_2)$ in $G$ as subgraphs of $G$.
	For such a proper pair $F = (H_1, H_2)$,
	$|V(H_1) \cup V(H_2)|$ is the number of vertices of $F$.

	\begin{remark}
		If $F = (H_1, H_2)$ is a $2$-bad proper pair
		with a common first vertex $v$, then trivially there
		is a subgraph $J$ of $F$ such that $J-v$ has an $L$-coloring $\varphi$.
		Moreover, if $R$ is the rank function induced by $L$ and $\varphi$
		as in Section 2, then $(J,v,R)$ is a bad proper triple. 
		So for a graph with
		a $2$-list assignment $L$, Lemma \ref{lem:2list}
		provides a stronger characterization
		than Lemma \ref{lem:charac} of which $2$-list assignments
		do not contain a proper coloring of the graph.
	\end{remark}

	Suppose that $H_1$ is an ordered cycle or lollipop
	and $H_2$ is an ordered cycle
	or lollipop. 
	Assume further that $H_1$ contains
	the vertices $v, v_2,\dots, v_{d_1}$
	and that the vertices lie in this order along $H_1$.
	Suppose that $v_i \in V(H_1) \cap V(H_2)$, $v_i \neq v$ 
	and let $u$ be the vertex that
	precedes $v_i$ along $H_2$. If $u v_i \notin E(H_1)$, then $v_i$
	is called a {\em non-consecutive common vertex} of $H_1$
	and $H_2$. Otherwise, if $uv_i \in H_1$ then
	$v_i$ is called a {\em consecutive common vertex}.
	
	The following two lemmas are essentially variants of
	Lemmas 13 and 14 in \cite{Casselgren2}, respectively;
	since we have not found
	a way to deduce them directly
	from those lemmas, we provide brief 
	sketches of the 
	proofs
	(for details see \cite{Casselgren2}).

\begin{lemma}
\label{lem:2bad}
	Let $G$ be a graph and let $L$
	be a random $(2,\mathcal{C})$-list assignment for $G$.
	Suppose that $F = (H_1, H_2)$
	is a proper pair in $G$ on $l$ vertices
	and $r$ non-consecutive common vertices.
	Then
	$$
	\mathbb{P}[F \text{ \emph{is $2$-bad}}] \leq
		\frac{2^{l+2r}}{\sigma^{l-1}(\sigma - 1)^{2+r}}.
	  $$
\end{lemma}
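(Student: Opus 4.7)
The plan is to bound $\mathbb{P}[F\text{ is }2\text{-bad}]$ by a union bound over all color tuples that witness a bad configuration. A $2$-bad configuration for $F=(H_1,H_2)$ is described by first colors $c_1\neq c_2$ with $L(v)=\{c_1,c_2\}$, together with middle colors $a_2,\dots,a_{d_1-1}$ of $H_1$ and $\gamma_2,\dots,\gamma_{d_2-1}$ of $H_2$ (where $d_i=|V(H_i)|$). For any such tuple the list at every vertex of $F$ is prescribed to be a specific $2$-subset of $\mathcal C$, so by independence of the $l$ lists the probability that the random $L$ realizes the prescribed lists equals $\binom{\sigma}{2}^{-l}$. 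Hence $\mathbb{P}[F\text{ is }2\text{-bad}]\le N\binom{\sigma}{2}^{-l}$, where $N$ is the number of valid color tuples, and the task reduces to estimating $N$.

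I would count the tuples by traversal. The choices of $c_1$ and $c_2$ contribute a factor of $\sigma(\sigma-1)$, and the alternation constraint $a_i\neq a_{i-1}$ along $H_1$ yields at most $(\sigma-1)^{d_1-2}$ options for the middle colors of $H_1$. For the middle colors $\gamma_2,\dots,\gamma_{d_2-1}$ of $H_2$ I would walk along $H_2$ from $v$ and at each step $i\in\{2,\dots,d_2-1\}$ analyze the admissible values of $\gamma_i$ in view of the already-chosen $\gamma_{i-1}$: if $u_i\notin V(H_1)$ then $\gamma_i$ has at most $\sigma-1$ options; if $u_i$ is a consecutive common vertex then the alternating edge shared with $H_1$ forces $\gamma_i$ to a single value; and if $u_i$ is a non-consecutive common vertex whose $H_1$-list is $\{\alpha,\beta\}$, then $\{\gamma_{i-1},\gamma_i\}=\{\alpha,\beta\}$ forces both $\gamma_{i-1}$ and $\gamma_i$ into $\{\alpha,\beta\}$, so the pair $(\gamma_{i-1},\gamma_i)$ contributes at most $4$ admissible values in place of the baseline $(\sigma-1)^2$.

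Writing $s$ for the number of consecutive common vertices in $V(H_2)\setminus\{v\}$, the identity $|V(H_1)\cap V(H_2)|=1+r+s$ gives $l=d_1+d_2-1-r-s$. Starting from the baseline count $\sigma\cdot(\sigma-1)^{d_1+d_2-3}$, the case analysis above shows that each consecutive common vertex saves a factor of $\sigma-1$ and each non-consecutive common vertex saves a factor of $(\sigma-1)^2/4$, yielding
\[
N\le\sigma\cdot 4^{r}\cdot(\sigma-1)^{d_1+d_2-3-2r-s}=\sigma\cdot 4^{r}\cdot(\sigma-1)^{l-2-r}.
\]
Multiplying by $\binom{\sigma}{2}^{-l}=2^l\sigma^{-l}(\sigma-1)^{-l}$ delivers the claimed bound
\[
\mathbb{P}[F\text{ is }2\text{-bad}]\le\frac{2^{l+2r}}{\sigma^{l-1}(\sigma-1)^{2+r}}.
\]

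The delicate step and principal obstacle is the retroactive interaction at non-consecutive common vertices: the restriction on $\gamma_{i-1}$ arises only when step $i$ is processed, and one must check that when several shared vertices occur in succession (for instance when $u_{i-1}$ itself is shared with $H_1$) the overlapping restrictions never produce more admissible tuples than the clean ``$4^r$'' accounting permits. Following \cite{Casselgren2}, this is handled by a short case analysis that verifies the worst case in each configuration, and by observing that the closure constraints at the terminal vertices of $H_1$ and $H_2$ (whether these are ordered cycles or lollipops) likewise admit at most two compatible color assignments and so are absorbed into the same bound.
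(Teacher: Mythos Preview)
Your proposal is correct and follows essentially the same approach as the paper: both parameterize the list assignments making $F$ $2$-bad by the underlying color sequences along $H_1$ and $H_2$, bound the number $N$ of such sequences by $\sigma\cdot 4^{r}(\sigma-1)^{l-2-r}$, and divide by $\binom{\sigma}{2}^l$. The only organizational difference is that the paper splits the non-consecutive common vertices into those whose $H_2$-predecessor lies in $V(H_1)$ (handled while choosing the lists of $H_1$) and the rest (handled while processing $V(H_2)\setminus V(H_1)$), thereby sidestepping the ``retroactive'' bookkeeping you flag; but both arguments ultimately defer the delicate overlap cases to the detailed analysis in \cite{Casselgren2}.
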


\begin{proof}[Proof (sketch).]
	Let $l_1 = |V(H_1)|$ and $l_2 = |V(H_2) \setminus V(H_1)|$.
	We will prove the lemma assuming that $l_2 > 0$. If $l_2 =0$,
	then a similar argument applies.
	Suppose that $H_1$ is an ordered cycle or an
	ordered lollipop on the vertices $v, v_2, v_3, \dots, v_{l_1}$
	and that the vertices lie in that order along $H_1$.
	Assume further that there are $r_1$ non-consecutive
	common vertices $u$ of $H_1$ and $H_2$
	such that the predecessor of $u$ along
	$H_2$ is in $V(H_1)$. Denote the set of these vertices by $R_1$.

	There are $\sigma (\sigma-1)$ ways of choosing the list
	$L(v)$ and selecting one of the colors of $L(v)$ as the first color
	of $H_1$.
	Then there are at most
	$(\sigma-1)^{l_1 - r_1- 2} 2^{2r_1}$
	ways of choosing the lists
	for $H_1 - v$, so that $H_1$ is $L$-alternating, and so that
	the list of every vertex of $R_1$
	has a color in common with the list of
	its predecessor along $H_2$; we choose the lists for
	the vertices $v_2, v_3, \dots, v_{l_1}$ sequentially
	except that for any pair of vertices $v_i$ and $v_j$ ($i < j$)
	that are adjacent on $H_2$ and satisfying that
	one of $v_i$ and $v_j$ is a non-consecutive common vertex, 
	we fix one color of $L(v_j)$
	immediately after choosing the colors for $L(v_i)$.
	
	So in total we have at most
	$\sigma (\sigma-1) (\sigma-1)^{l_1 - r_1- 2} 2^{2r_1}$
	choices for the lists of $H_1$.

	Since there are $r-r_1$ non-consecutive common vertices of $H_1$ and $H_2$
	that are not in $R_1$,
	there are thereafter at most $(\sigma-1)^{l_2 - 1 - (r-r_1)}2^{r-r_1}$
	ways of choosing the lists of the vertices
	in $V(H_2) \setminus V(H_1)$ so that $H_2$ is $L$-alternating as well.
	Since, in total, there are $\binom{\sigma}{2}^l$ ways of choosing 
	the restriction of $L$
	to the vertices in $F$, the result follows.
\end{proof}

\begin{lemma}
\label{lem:2numbersubgraphs}
	Let $G$ be a graph on $n$ vertices with maximum degree
	$\Delta$.
	The number of proper pairs $F = (H_1,H_2)$
	on $l$ vertices  in $G$ with $r$ non-consecutive vertices
	is at most $n \Delta^{l-1+r} 2^l$.
\end{lemma}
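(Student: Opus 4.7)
The plan is a direct sequential counting of proper pairs, analogous to the technique of Lemma 14 in \cite{Casselgren2}. First, I choose the common first vertex $v$ of $H_1$ and $H_2$, contributing a factor of $n$. Next, I construct $H_1$ as an ordered cycle or ordered lollipop rooted at $v$ with $l_1$ vertices: at each of the $l_1-1$ subsequent steps I pick a neighbor in $G$ of the previous vertex, contributing $\Delta^{l_1-1}$; then I specify the closing edge (cycle vs.\ lollipop, and the closing endpoint if a lollipop), which contributes at most $l_1\le 2^{l_1}$ additional position choices, to be absorbed into the final $2^l$ factor.

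Then I build $H_2$ rooted at $v$. At each step the next vertex is classified as one of three types: (i) a \emph{fresh} vertex, not yet appearing in $V(H_1)\cup V(H_2)$; (ii) a \emph{non-consecutive common vertex}; or (iii) a \emph{consecutive common vertex}, i.e.\ reached along an edge of $E(H_1)$. In cases (i) and (ii) there are at most $\Delta$ choices for the next vertex; since there are exactly $l-l_1$ fresh vertices and exactly $r$ non-consecutive common vertices in the union, the total contribution of these two cases is $\Delta^{l-l_1}\cdot\Delta^{r}$. In case (iii) there are at most $3$ choices at each step, since $H_1$ is a cycle or lollipop in which every vertex has at most $3$ incident edges. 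Crucially, consecutive-common steps arrive in \emph{runs} along $H_1$: once a run has begun, the walk is essentially forced, because at each interior vertex of $H_1$ only one unvisited $H_1$-neighbor is available (with a little extra freedom only at the branching vertex of a lollipop). Thus each run contributes only $O(1)$ choices, and the number of runs is bounded by one plus the number of non-consecutive-common steps, giving a total contribution from case (iii) of $O(1)^{a+r+O(1)}$, which together with the binary ordering/type/closing decisions can be absorbed into $2^l$. Multiplying the factors together yields
\[
n\cdot\Delta^{l_1-1}\cdot\Delta^{l-l_1}\cdot\Delta^{r}\cdot 2^{l}
\;=\; n\,\Delta^{l-1+r}\,2^{l}.
\]

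The main obstacle is the careful accounting of consecutive-common steps. A naive bound of $3$ choices per such step would give a factor of $3^b$ (where $b$ is the number of consecutive common vertices), and this is \emph{not} absorbable into $2^l$ in the worst case; for example, when $H_2=H_1$ is a cycle of length $l$, we have $b=l-1$ and $3^{l-1}\gg 2^l$. Exploiting the run structure of consecutive-common walks along the simple cycle/lollipop $H_1$ circumvents this, producing the clean $\Delta^{r}\cdot 2^{l}$ overhead on top of the $\Delta^{l-1}$ coming from the $l-1$ first-introduction edges spanning the union $V(H_1)\cup V(H_2)$.
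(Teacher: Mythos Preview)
Your approach is essentially the same as the paper's sketch: choose $v$ (factor $n$), build $H_1$ (factor $\Delta^{l_1-1}$), then build $H_2$ step by step, charging $\Delta$ for each fresh or non-consecutive-common step (factor $\Delta^{l_2+r}$) and handling the consecutive-common steps separately. The paper's justification for the remaining factor is simply that ``there are at most $l_1$ vertices of $H_1$ which might be consecutive common vertices,'' giving $2^{l_1}\le 2^l$; your run-structure argument is a more detailed attempt at the same bookkeeping.

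Your run observation is correct and useful: a consecutive-common step can only follow a vertex of $V(H_1)$, so each maximal run begins at $v$ or at one of the $r$ non-consecutive-common vertices, hence there are at most $1+r$ runs, and for a fixed run length each run is a simple path in the cycle/lollipop $H_1$ with $O(1)$ choices. Where your write-up is loose is the final absorption: on top of the $\Delta^{l-1+r}$ you still have (i) the binary type sequence $2^{d-1}\le 2^{l-1}$, (ii) the $O(1)^{1+r}$ from the runs, (iii) the closing choices for $H_1$ and $H_2$ (each $\le l$), and (iv) the implicit sums over $l_1$ and over the number $b$ of consecutive-common vertices. These do not obviously multiply to $\le 2^l$; a careful tally yields $C^l$ for some absolute constant $C>2$. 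The undefined symbol $a$ in ``$O(1)^{a+r+O(1)}$'' and the phrase ``can be absorbed into $2^l$'' paper over exactly this accounting. The paper's own sketch is equally terse here (it defers to \cite{Casselgren2}), and for the application in Theorem~\ref{th:main2} any bound of the form $n\Delta^{l-1+r}C^l$ suffices, so the gap is one of precision rather than of method.
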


\begin{proof}[Proof (sketch).]
	Let $|V(H_1)| = l_1$ and $|V(H_2)\setminus V(H_1)| = l_2$.
	The first vertex $v$ of $H_1$ and $H_2$ can be chosen in at most
	$n$ ways. After that, there are at most $\Delta^{l_1-1}$
	choices for the rest of $H_1$.
	Thereafter, there are at most
	$2^{l_1} \Delta^{l_2+r}$
	choices  
	for the
	vertices of $H_2-v$,
	because there are at most $l_1$ vertices of $H_1$ which might be
	consecutive common vertices of $H_1$ and $H_2$,
	and $l_2+ r$ other vertices of 
	$H_2$. 
\end{proof}

\begin{proof}[Proof of Theorem \ref{th:main2}.]
	Let $G =G(n)$ be a graph on $n$ vertices with maximum
	degree at most $\Delta$ and let
	$L$ be a random $(2,\mathcal{C})$-list assignment
	for $G$. 
	By Lemma \ref{lem:2list}, it suffices to prove that
	if either $\Delta = \Omega(n^{1/2})$ and $\sigma = \omega(\Delta)$
	or $\Delta = o(n^{1/2})$ and $\sigma = \omega(n^{1/4} \Delta^{1/2})$,
	then {\bf whp} $G$ does not contain a $2$-bad proper pair.
	We will use easy first moment calculations.

	Any $2$-bad proper pair contains at least $3$ vertices.
	For $l \in \{3,\dots,n\}$ and $r \in \{1,\dots,l\}$,
	let $X_{l,r}$ be a random variable counting the number
	of $2$-bad proper pairs with $l$ vertices and
	$r$ non-consecutive common vertices in $G$,
	and let $$X = \sum_{l=3}^n \sum_{r=0}^l X_{l,r}.$$

	By Lemma \ref{lem:2numbersubgraphs},
	$f(l,r) \leq n \Delta^{l-1+r} 2^l$,
	where $f(l,r)$ is the number of
	proper pairs on $l$ vertices
	and $r$ non-consecutive common vertices in $G$.
	Let $p_{l,r}$ be the least number
	such that $\mathbb{P}[F_{l,r} \text{ is $2$-bad}] \leq p_{l,r}$, whenever
	$F_{l,r}$ is a proper pair on altogether $l$ vertices 
	and $r$ non-consecutive common vertices in $G$.
	By Lemmas \ref{lem:2bad} and \ref{lem:2numbersubgraphs} we have
\begin{align*}
	\mathbb{P[\text{$G$ contains a $2$-bad proper pair}]}  \leq
	\mathbb{E}[X] 
	& \leq \sum_{l=3}^{n} \sum_{r=0}^{l}  f(l,r) p_{l,r}
	\nonumber \\
	&
	\leq \sum_{l=3}^{n} \sum_{r=0}^{l}
	n \Delta^{l-1+r} 2^l
	\frac{2^{l+2r}}{\sigma^{l-1}(\sigma - 1)^{2+r}}
	\nonumber  \\
	&	=O\left(\frac{n \Delta^2}{\sigma^{4}}\right) \sum_{l=0}^{n} 
	 \left(\frac{4 \Delta}
	{\sigma} \right)^l
	\nonumber \\
	& = o(1),
\end{align*}
	provided that $\Delta = o(n^{1/2})$ and 
	$\sigma(n) = \omega(n^{1/4} \Delta^{1/2})$, or
	$\Delta = \Omega(n^{1/2})$ and $\sigma = \omega(\Delta)$.
\end{proof}

Consider a random $(2,\mathcal{C})$-list assignment
for a graph $G = G(n)$ on $n$ vertices with maximum degree
at most $\Delta = \Delta(n)$ and girth $g$.
A $2$-bad proper pair in $G$ has at least $g$ vertices, and by proceeding precisely
as in the proof of Theorem \ref{th:main2} we can prove the following result
which yields a better bound than the one of Proposition \ref{prop2} 
if $k \leq g$.

\begin{proposition}
	\label{prop3}
	Let $G=G(n)$ be a graph on $n$ vertices with maximum
	degree at most $\Delta=\Delta(n)$ and girth at least $g$, where $g$ is a fixed
	positive integer, and let
	$L$ be a random $(2,\mathcal{C})$-list assignment for $G$.
		If
	$\sigma(n) = \omega(n^{\frac{1}{g+1}} \Delta)$,
	then {\bf whp} $G$ is $L$-colorable.
\end{proposition}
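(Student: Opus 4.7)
The plan is to follow the proof of Theorem \ref{th:main2} essentially verbatim, making only one modification. By Lemma \ref{lem:2list}, it suffices to show that {\bf whp} $G$ contains no $2$-bad proper pair, and the natural first-moment approach applies, with the same random variable $X = \sum_{l,r} X_{l,r}$ counting $2$-bad proper pairs on $l$ vertices with $r$ non-consecutive common vertices.

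The key new ingredient is that, since $G$ has girth at least $g$, every $2$-bad proper pair $F = (H_1,H_2)$ satisfies $|V(H_1) \cup V(H_2)| \geq g$. Indeed, each of $H_1$ and $H_2$ is either an ordered cycle (automatically on at least $g$ vertices, since it is a cycle of $G$), or an ordered lollipop $u_1 \dots u_j \dots u_d u_j$; in the latter case the subsequence $u_j u_{j+1} \dots u_d u_j$ forms a cycle of $G$, so $d - j + 1 \geq g$, and in particular $|V(H_i)| \geq g$. Hence the outer index $l$ in the first-moment computation may be taken to start at $g$ rather than at $3$.

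Redoing the calculation from the proof of Theorem \ref{th:main2} with this modified range, the double geometric sum over $l$ and $r$ is still bounded by $O(1)$ thanks to $\sigma = \omega(\Delta)$ (which is implied by the hypothesis), and the leading term picks up an extra factor of $(\Delta/\sigma)^{g-3}$ relative to the estimate in Theorem \ref{th:main2}, yielding
$$\mathbb{E}[X] = O\!\left(\frac{n \Delta^{g-1}}{\sigma^{g+1}}\right).$$
This tends to $0$ as $n \to \infty$ under the hypothesis $\sigma = \omega(n^{1/(g+1)} \Delta)$ (which is even stronger than what is needed: the bound $\sigma = \omega(n^{1/(g+1)}\Delta^{(g-1)/(g+1)})$ already suffices), and the proposition then follows from Markov's inequality.

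No significant obstacle is expected; the only subtle point is the verification that lollipops, as well as cycles, contain a cycle of $G$ of length at least $g$, so that every $2$-bad proper pair really has at least $g$ vertices. Once this observation is in place, the remainder is a direct adaptation of the first-moment argument already carried out in the proof of Theorem \ref{th:main2}.
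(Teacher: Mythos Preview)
Your proposal is correct and follows exactly the approach the paper itself indicates: the paper simply remarks that a $2$-bad proper pair in a graph of girth at least $g$ has at least $g$ vertices and then says to proceed ``precisely as in the proof of Theorem \ref{th:main2}.'' Your write-up supplies the justification for the lower bound on the size (each $H_i$ contains a cycle of $G$) and carries out the first-moment computation with the summation starting at $l=g$, arriving at $\mathbb{E}[X]=O(n\Delta^{g-1}/\sigma^{g+1})$, which is the intended argument.
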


In the next section we shall prove several other results for graphs
with girth greater than $3$.


\section{Graphs with girth greater than three}
	For the graph $H$ in the example in Section 2 showing
	that Theorem \ref{th:main1} is best possible, the threshold
	for the property of being colorable 
	from a random $(k,\mathcal{C})$-list assignment
	coincides with the threshold
	for disappearence of $(k+1)$-cliques where each vertex has
	the same list; that is, when $\sigma(n)= \omega(n^{1/k^2}\Delta^{1/k})$,
	then {\bf whp} $H$ has no such cliques, and when
	$\sigma(n) = o(n^{1/k^2}\Delta^{1/k})$, 
	then {\bf whp} $H$ contains
	a $(k+1)$-clique where the vertices have identical lists.

	As in \cite{Casselgren2, Casselgren3},
	for a graph $G=G(n)$ on $n$ vertices with girth $g \geq 4$
	(and thus with no $(k+1)$-cliques if $k \geq 2$)
	it is possible to establish
	a better bound on $\sigma(n)$ than those given
	by Theorem \ref{th:main1},
	Propositions \ref{prop1} and \ref{prop2},
	which implies list colorability of $G$
	from a random 
	$(k,\mathcal{C})$-list assignment.
	Indeed, the coarse 
	threshold $n^{1/k^2} \Delta^{1/k}$ in Theorem \ref{th:main1}
	is essentially due to the fact that the probability that a $(k+1)$-clique
	has a list assignment where all lists are equal 
	is $\binom{\sigma}{k}^{-k}$
	and the maximum number of $(k+1)$-cliques in a graph with maximum
	degree $\Delta$ is roughly $n \Delta^k$, if $\Delta$ is sufficiently
	small compared to $n$. 
	
	If $L$ is a $k$-list assignment for a graph $G$, then
	any $L$-critical subgraph of $G$ which is not a $(k+1)$-clique
	has strictly more than $k+1$ vertices, and  employing this fact
	we can use the very same methods as in Section 2 and 3,
	to prove better bounds on $\sigma$ which ensures that {\bf whp}
	$G$ is colorable from a random $(k,\mathcal{C})$-list assignment.
	Considering graphs with large girth is one way of increasing the minimum
	number of vertices in an $L$-critical graph - 
	which is the most relevant parameter -
	and one can of course derive corresponding results for
	other families of graphs.
	
	We emphasize that the results in this section are probably
	not best possible
	for any values of $\Delta$ or $k$. To prove sharp results, a first
	step would be to investigate how many 
	vertices the smallest non-$k$-choosable graph with girth $g$
	has, and then estimate the number of $k$-list assignments
	that do not contain
	a proper coloring of such a graph.

	We will proceed as in the proof of Theorem \ref{th:main1}
	and use first moment
	calculations to show that if $L$ is a random $(k,\mathcal{C})$-list
	assignment for a graph $G=G(n)$ on $n$ vertices with girth $g$, 
	where $k$ and $g$ are fixed integers satisfying
	$k \geq 3$ and $g> 3$, respectively, $\Delta=\Delta(n)$ is sufficiently small,
	and $\sigma=\sigma(n)$ is large enough,
	then {\bf whp} $G$ has no connected induced $L$-critical subgraph
	and thus is {\bf whp} $L$-colorable. (The case $k=2$ was considered
	in the previous section.)
	
	Now, any $L$-critical subgraph of $G$ has minimum degree $k+1$
	(which follows from the list-coloring version of Brooks' theorem
	since $k \geq 3$),
	and as pointed out in \cite{Casselgren3} (see also \cite{Bollobas}),
	such a graph contains
	at least $Q(k+1)$ vertices,
	where $Q(k) = Q(k,g)$ satisfies
	
\begin{equation}
\label{eq:Qk}	
	Q(k) = 
	\begin{cases}
	1+ k \Big(1+(k-1)+ (k-1)^2 + \dots + (k-1)^{\frac{g-3}{2}}\Big), & 
	\text{if $g$ is odd} , \\
	2\Big(1+(k-1)+(k-1)^2+\dots+(k-1)^{\frac{g-2}{2}}\Big), & 
	\text{if $g$ is even},
	\end{cases}
\end{equation}	
and thus
	$$
	Q(k) = 
	\begin{cases}
	1+ \dfrac{k}{k-2}\big((k-1)^{(g-1)/2}-1\big) & \text{if $g$ is odd,} \\
	\dfrac{2}{k - 2} \big( (k-1)^{g/2}-1 \big) & \text{if $g$ is even.}
	\end{cases} 
	$$
	
	Suppose that 
	$\sigma = \omega\left(n^{\frac{1}{(k-1)Q(k+1) + 1}} \Delta^s \right)$,
	where $s$ is a fixed integer satisfying
	$s \geq 1 + \frac{1}{k-1}$
	and 
	$\Delta = O\left(n^{\frac{1}{k Q^2(k+1) }} \right)$.
	By proceeding as in the proof of
	Theorem \ref{th:main1}, it is not hard to show
	that if $F$ is $L$-critical and thus by Lemma \ref{lem:charac}
	belongs to an $L$-bad proper triple $(F,v,R)$ of $G$, 
	then {\bf whp}
	$F$ contains at most $\Delta^{(k-1)Q(k+1) +1}$ vertices.
	Hence, by Lemma \ref{lem:charac}, it suffices to prove that
	{\bf whp} $G$ contains no bad proper triple $(F, v, R)$
	such that $$Q(k+1) \leq |V(F)| \leq \Delta^{(k-1)Q(k+1) +1}.$$

	Set $h(k) =  \Delta^{(k-1)Q(k+1) +1}$,
	and let $Z_m$ be a random variable
	counting the number of $L$-bad proper triples $(F, v, R)$ in $G$ 
	such that $F$ has $m$ vertices,
	and set $$Z = \sum_{m=Q(k+1)}^{h(k)} Z_m.$$
	Using Lemmas \ref{lem:numbersubgraphs} and \ref{lem:bad} 
	we may now conclude that
\begin{align*}
	\mathbb{P[\text{$G$ contains an $L$-bad proper triple}]} & \leq
	\mathbb{E}[Z] + o(1) \nonumber \\
	& \leq \sum_{m=Q(k+1)}^{h(k)} 
	n \Delta^{m-1}(m-1)!
	\frac
	{\sigma^{m-1} \binom{\Delta}{k} \binom{\Delta k}{k-1}^{m-1} }
	{\binom{\sigma}{k}^m} + o(1)
	\nonumber \\
	&	=O\left(\frac{n}{\sigma \Delta}\right) \sum_{m=Q(k+1)}^{h(k)}
	 \Big(\frac{m k^k \Delta^{k}}
	{\sigma^{k-1}} \Big)^m
	\nonumber \\
	&
	= O \Big( \frac{n \Delta^{kQ(k+1)}}{\sigma^{(k-1)Q(k+1)+1}} \Big)
	\sum_{m=0}^{\infty} \Big(\frac{k^{k} \Delta^{(k-1)Q(k+1) + k+1}}
	{\sigma^{k-1}} \Big)^m \nonumber \\ 
	& = o(1),
\end{align*}
	provided that 
	$\sigma(n) = \omega\left(n^{\frac{1}{(k-1)Q(k+1) + 1}} \Delta^s \right)$,
	$s\geq 1 + 1/(k-1)$
	and 
	$\Delta = O \left(n^{\frac{1}{k Q^2(k+1)}} \right)$.
	Let us determine $(k-1) Q(k+1)+1$ explicitly for some small values of $g$.
	When $g=4$, then if
	$$\sigma(n) = \omega(n^{\frac{1}{2k^2-1}} \Delta^s)
	\text{ and } \Delta= O\left(n^{\frac{1}{4(k^3 + 2k^2 + k)}}\right),$$
	then  {\bf whp} there
	is an $L$-coloring of $G$,
	and when $g=5$, then it suffices to require that
	$$\sigma(n) = \omega(n^{1/(k^3+k^2-1)} \Delta^s)
	\text{ and }
	\Delta = O\left(n^{\frac{1}{k^5+ 4k^4 + 8k^3+ 8k^2 + 4k}}\right).$$

	We collect these results in the following proposition.

\begin{proposition}
\label{prop:girth}	
	Let $G= G(n)$ be a graph on $n$ vertices 
	with maximum degree $\Delta$
	and girth at least $g$, where $g$ is a fixed positive integer.
	Suppose that $k$ is a fixed integer satisfying $k \geq 3$ and
	that $L$ is a random $(k,\mathcal{C})$-list assignment for $G$.
	Moreover, let  $s$ be a fixed integer satisfying
	$s \geq 1 + \frac{1}{k-1}$
	\begin{itemize}
	
	\item[(i)] If $g=4$,
	$\sigma(n) = \omega\left(n^{\frac{1}{2k^2-1}} \Delta^s \right)$, 
	and $\Delta= O\left(n^{\frac{1}{4k^3 + 8k^2 + 4k}}\right)$,
	then {\bf whp}
	$G$ has an $L$-coloring.
	
	\item[(ii)] If $g=5$,
	$\sigma(n) = \omega\left(n^{1/(k^3+k^2-1)}\Delta^s \right)$,
	and $\Delta = O\left(n^{\frac{1}{k^5+ 4k^4 + 8k^3+ 8k^2 + 4k}}\right)$,
	then {\bf whp}
	$G$ has an $L$-coloring.
	
	\item[(iii)] If $g > 5$, then there are polynomials $P(k)$ 
	and $R(k)$
	in k of
	degree $\lceil g/2 \rceil$ and $2\lceil g/2 \rceil-1$,
	respectively, such that if 
	$\sigma(n) = \omega\left(n^{1/P(k)} \Delta^s \right)$,
	and $\Delta = O(n^{1/R(k)})$,
	then {\bf whp} $G$ is $L$-colorable.
	Moreover, 
	$$P(k)=(k-1)Q(k+1)+1 \text{ and } 
	R(k)= k Q^2(k+1),$$
	where $Q(k)$ is given by \eqref{eq:Qk}.
\end{itemize}

\end{proposition}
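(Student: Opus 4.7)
The plan is to mimic the proof of Theorem \ref{th:main1} essentially line by line, replacing the trivial lower bound $|V(F)|\geq k+1$ on an $L$-critical subgraph with the sharper Moore-type bound $|V(F)|\geq Q(k+1)$ that the girth hypothesis affords. First, I would observe that for $k\geq 3$ and girth $g\geq 4$, the list-coloring version of Brooks' theorem together with the fact that an $L$-critical subgraph $F$ can be neither a $(k+1)$-clique (ruled out by girth $\geq 4$) nor an odd cycle (which would require $k=2$) forces $F$ to have minimum degree at least $k+1$. Combined with the girth lower bound $g$, the usual Moore-type counting then yields $|V(F)|\geq Q(k+1)$, with $Q$ as in \eqref{eq:Qk}.

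For the matching upper bound on $|V(F)|$, I would reproduce the alternating-path calculation from the proof of Theorem \ref{th:main1}: the expected number of paths in $G$ of length $r$ that support an $(\varphi,L)$-alternating structure for some partial coloring $\varphi$ is at most $n\Delta^{r-1}k^{2r}/\sigma^{r-1}$. Summing over $r\geq (k-1)Q(k+1)+2$ and using the hypotheses $\Delta=O\bigl(n^{1/(kQ^2(k+1))}\bigr)$ and $\sigma=\omega\bigl(n^{1/((k-1)Q(k+1)+1)}\Delta^s\bigr)$, this sum tends to zero, so by Markov's inequality and Lemma \ref{lem:charac}(i), whp any $F$ belonging to a bad proper triple satisfies $|V(F)|\leq \Delta^{(k-1)Q(k+1)+1}$. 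Combining this with Lemmas \ref{lem:numbersubgraphs} and \ref{lem:bad} and simplifying exactly as in the proof of Theorem \ref{th:main1}, the expected number of bad proper triples with $|V(F)|\geq Q(k+1)$ is bounded by
\[
O\!\left(\frac{n\Delta^{kQ(k+1)}}{\sigma^{(k-1)Q(k+1)+1}}\right)\sum_{m=0}^{\infty}\left(\frac{k^k\Delta^{(k-1)Q(k+1)+k+1}}{\sigma^{k-1}}\right)^m,
\]
and I would verify that the prefactor is $o(1)$ under $\sigma=\omega(n^{1/P(k)}\Delta^s)$ with $s\geq 1+1/(k-1)$, while the geometric ratio is bounded away from $1$ by $\Delta=O(n^{1/R(k)})$ with $R(k)=kQ^2(k+1)$. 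Parts (i) and (ii) then follow by plugging $g=4$ (so $Q(k+1)=2k+2$, $P(k)=2k^2-1$, $R(k)=4k^3+8k^2+4k$) and $g=5$ (so $Q(k+1)=k^2+2k+2$, $P(k)=k^3+k^2-1$, $R(k)=k^5+4k^4+8k^3+8k^2+4k$) into \eqref{eq:Qk}. Part (iii) then follows from the observation that $Q(k+1)$ is a polynomial in $k$ of degree $g/2-1$ for even $g$ and $(g-1)/2$ for odd $g$, so that $(k-1)Q(k+1)+1$ has degree $\lceil g/2\rceil$ and $kQ^2(k+1)$ has degree $2\lceil g/2\rceil-1$, as required.

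The main obstacle is the exponent bookkeeping: one must check that the condition $s\geq 1+1/(k-1)$ together with $\sigma=\omega(n^{1/P(k)}\Delta^s)$ is precisely strong enough to kill the prefactor $n\Delta^{kQ(k+1)}/\sigma^{(k-1)Q(k+1)+1}$, while simultaneously $\Delta=O(n^{1/R(k)})$ is exactly what is needed to drive the geometric ratio $k^k\Delta^{(k-1)Q(k+1)+k+1}/\sigma^{k-1}$ below $1$. Both estimates crucially use that the minimum number of vertices in an $L$-critical subgraph of $G$ is $Q(k+1)$ rather than $k+1$; without the girth hypothesis one would be forced back to the weaker exponents of Theorem \ref{th:main1}. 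Apart from this accounting, everything is a direct adaptation of the Section 2 machinery.
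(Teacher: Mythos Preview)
Your proposal is correct and follows the paper's argument essentially line by line: the same Moore-type lower bound $|V(F)|\geq Q(k+1)$ via minimum degree and girth, the same alternating-path upper bound $|V(F)|\leq \Delta^{(k-1)Q(k+1)+1}$, the same application of Lemmas \ref{lem:numbersubgraphs} and \ref{lem:bad} leading to the identical displayed estimate, and the same explicit evaluations for $g=4,5$ and degree count for general $g$. There is nothing to add.
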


For graphs with no restriction on the maximum degree,
we shall prove the following, which yields a
slightly weaker bound on $\sigma$. 

\begin{proposition}
\label{prop:girth2}
	Let $G= G(n)$ be a graph on $n$ vertices 
	with maximum degree $\Delta = \Delta(n)$
	and girth at least $g$, where $g > 3$ is a fixed positive integer.
	Suppose further that $k$ is a fixed integer satisfying $k \geq 3$ and
	that $L$ is a random $(k,\mathcal{C})$-list assignment for $G$.
	If $\sigma = \omega\left(n^{\frac{1}{Q(k)-1}} \Delta\right)$,
	where $Q(k)$ is given by \eqref{eq:Qk},
	then {\bf whp} $G$ is $L$-colorable.
\end{proposition}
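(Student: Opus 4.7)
The plan is to refine the first-moment approach of Proposition \ref{prop2} by using a larger rooted subtree as the combinatorial witness, exploiting the girth assumption. As before, by Lemma \ref{lem:charac} it suffices to show that {\bf whp} $G$ contains no $L$-critical subgraph.

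Suppose $F \subseteq G$ is $L$-critical. Since $L$ is a $k$-list assignment, every vertex of $F$ has $F$-degree at least $k$: otherwise one could extend any $L$-coloring of $F-v$ to $v$ by choosing a color in $L(v)$ avoided by its fewer than $k$ neighbors, contradicting criticality. Combined with girth $\geq g$, the Moore bound forces $|V(F)| \geq Q(k)$. Fix any $v_1 \in V(F)$ and apply Lemma \ref{lem:charac} to obtain an $L$-coloring $\varphi$ of $F-v_1$ together with the induced rank $R$. For each $x \in V(F) \setminus \{v_1\}$, the definition of $R$ via shortest $(\varphi,L)$-alternating paths from $v_1$ supplies a neighbor $p(x) \in N_F(x)$ with $R(p(x)) = R(x)-1$ and $\varphi(x) \in L(p(x))$ (namely, the penultimate vertex of a shortest such alternating path). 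The map $x \mapsto p(x)$ defines a spanning tree $T_\varphi$ of $F$ rooted at $v_1$ whose every edge $(p(x),x)$ satisfies $\varphi(x) \in L(x) \cap L(p(x))$. Since $|V(T_\varphi)| \geq Q(k)$, we truncate $T_\varphi$ in BFS order from $v_1$ to obtain a subtree $T$ on exactly $m := Q(k)$ vertices in which every edge still has endpoints whose lists intersect.

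Thus it suffices to bound the expected number of rooted subtrees $T \subseteq G$ on $m$ vertices satisfying $L(u) \cap L(w) \neq \emptyset$ for every edge $(u,w)$ of $T$. The counting argument used for Lemma \ref{lem:numbersubgraphs} shows that the number of ranked rooted subtrees of $G$ on $m$ vertices is at most $n \Delta^{m-1}(m-1)!$. For a fixed such tree $T$, enumerate its edges in the BFS order from $v_1$ and condition sequentially on already-revealed parents' lists; because each new vertex's $k$-list is chosen independently, the conditional probability of the corresponding intersection event is at most $1 - \binom{\sigma-k}{k}/\binom{\sigma}{k} \leq k^2/\sigma$ (for $\sigma$ large), so the joint probability is at most $(k^2/\sigma)^{m-1}$. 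With $k$ and $g$ (hence $m = Q(k)$) fixed, the expected number of such trees is
\[
O\!\left( n(\Delta/\sigma)^{Q(k)-1} \right),
\]
which tends to zero whenever $\sigma = \omega\!\left(n^{1/(Q(k)-1)}\Delta\right)$, and Markov's inequality concludes.

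The only step that is not a purely routine first-moment calculation is the structural one of identifying the tree $T$ on $Q(k)$ vertices all of whose edges already witness a list intersection; this is handled by combining the easy minimum-degree bound on $F$, the Moore inequality, and the alternating-path tree constructed from the rank function provided by Lemma \ref{lem:charac}. Once this tree is isolated, the probability estimate is a direct generalization of the single-vertex argument in Proposition \ref{prop2} from $k$ to $Q(k)-1$ edge constraints.
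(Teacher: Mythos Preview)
Your argument is correct and gives the same bound $O\!\left(n(\Delta/\sigma)^{Q(k)-1}\right)$ as the paper, but it reaches it by a genuinely different and more economical route. The paper first proves a new structural lemma (Lemma~\ref{lem:badtree}) asserting that any non-$L$-colorable graph of girth $g$ contains a \emph{tree-bad} rooted $k$-proper tree: a tree with the specific $k$-ary branching shape on exactly $Q(k)$ vertices together with a very rigid coloring condition at every internal vertex. That lemma requires a nontrivial recoloring argument. Only then does the paper do the first-moment count, exploiting that the tree shape is fixed to get the enumeration bound $n\Delta^{Q(k)-1}$.

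You bypass Lemma~\ref{lem:badtree} entirely. You use only the elementary minimum-degree bound $\delta(F)\ge k$ for an $L$-critical $F$, the Moore inequality to force $|V(F)|\ge Q(k)$, and the alternating-path tree already supplied by Lemma~\ref{lem:charac}; truncating that tree to $Q(k)$ vertices gives a witness whose only requirement is that adjacent lists intersect along every edge. Your witness is structurally looser than the paper's (any rooted subtree rather than the specific $k$-proper shape, and mere pairwise list-overlap rather than the full ``tree-bad'' condition), so you pay an extra $(Q(k)-1)!$ in the enumeration and use only the crude $(k^2/\sigma)^{Q(k)-1}$ probability bound; but since $k$ and $g$ (hence $Q(k)$) are fixed, these constants are absorbed and the asymptotic conclusion is identical. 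The trade-off is that the paper's sharper witness would matter if one cared about constants or wanted to let $k$ or $g$ grow, while your approach is shorter and avoids introducing and proving a new combinatorial lemma.
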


For the proof of the above result we need some new tools.
The {\em distance} between two vertices in a graph $G$
is the number of edges in a shortest path in $G$ between them.

Given a graph $G$ with girth $g$, where $g$ is a positive integer,
an {\em odd rooted $k$-proper tree} in $G$
is a rooted tree $T$ with root $v$ such that $v$ has $k$ neighbors
in $T$, and for each $i=1, \dots, \lfloor (g-3)/2 \rfloor$, each
vertex at distance $i$ from $v$ has exactly $k-1$ neighbors at distance
$i+1$ from $v$, and no vertex of distance $\lfloor (g-1)/2 \rfloor$ 
from $v$ has a neighbor at greater distance from $v$. 
Note that the subgraph of $G$ induced by all vertices
at distance at most $\lfloor (g-3)/2 \rfloor$ from $v$ in $T$ is a tree, and
that no two vertices in $G$ at distance $\lfloor (g-3)/2 \rfloor$ from $v$ 
has a common neighbor at distance $\lfloor (g-1)/2 \rfloor$ from $v$,
because $G$ has girth $g$.

For an even integer $g$, and a graph $G$ with girth $g$
we define the concept of an {\em even $k$-proper tree} as follows:
let $u$ and $v$ be adjacent vertices and let $T_u$ and $T_v$
be two vertex-disjoint odd rooted $k$-proper trees in $G$
with roots $u$ and $v$, respectively, 
except for the fact that $u$ and $v$ has exactly $k-1$ neighbors in
$T_u$ and $T_v$, respectively.
An {\em even rooted $k$-proper tree} with root $v$ and {\em semiroot $u$} is the 
union of $T_u$ and $T_v$ along with the edge $uv$.

It is straightforward to prove that for odd $g$, an odd $k$-proper rooted 
tree has exactly $Q(k)$ vertices, 
and that for even $g$, an even $k$-proper rooted tree has exactly $Q(k)$
vertices,
where $Q(k)$ is given by \eqref{eq:Qk}. (Indeed, this formula can be proved
by considering a breadth-first search tree with root at some vertex
in a graph with minimum degree $k$.)

Suppose now that $L$ is a $k$-list assignment for $G$, and
that $T$ is an odd rooted $k$-proper tree $T$ in $G$ with root $v$.
Then $T$ is {\em $(\text{tree},L)$-bad} (or just {\em tree-bad}) if there is an $L$-coloring
$\varphi$ of $T-v$ such that

\begin{itemize}

\item[(i)] $L(v) =\{\varphi(x): x \in N_T(v)\}$

\item[(ii)] for every $i=1,\dots,\lfloor (g-3)/2 \rfloor$,
every vertex $x \in V(T)$
at distance $i$ from $v$ satisfies that $$L(x) \setminus \{\varphi(x)\} =
 \{\varphi(y) : y \in N_T(x) \text{ and 
the distance between $y$ and $v$ is $i+1$}\}.$$

\end{itemize}

Suppose now that $g$ is even and that
$T$ is an even rooted $k$-proper tree with root $v$ and semiroot $u$.
Then $T$ is 
{\em $(\text{tree},L)$-bad} (or just {\em tree-bad}) if
there is an $L$-coloring
$\varphi$ of $T-v$ such that

\begin{itemize}

\item[(iii)] $L(v) =\{\varphi(x): x \in N_T(v)\}$

\item[(iv)] for every $i=1,\dots, (g-4)/2$,
every vertex $x \in V(T)$ at distance $i$ from $v$ satisfies that
$$L(x) \setminus \{\varphi(x)\}= \{\varphi(y) : y \in N_T(x) \text{ and 
the distance between $y$ and $v$ is $i+1$}\},$$
and for every $i=1,\dots,(g-4)/2$,
every vertex $z \in V(T)$ at distance $i$ from $u$ satisfies that
$$L(z) \setminus \{\varphi(z)\}= \{\varphi(w) : w \in N_T(z) \text{ and 
the distance between $w$ and $u$ is $i+1$}\}.$$

\end{itemize}

\begin{lemma}
\label{lem:badtree}
	Let $G$ be a graph with girth $g$ and $L$ a $k$-list assignment for $G$,
	where $g$ and $k$ are  positive integers.
	If $G$ is not $L$-colorable and $g$ is odd (even), then
	there is a tree-bad odd (even) rooted $k$-proper tree $T$
	in $G$.
\end{lemma}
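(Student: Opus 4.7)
My plan is to extract the tree-bad tree from a minimal obstruction. Since $G$ is not $L$-colorable, let $H$ be a connected induced $L$-critical subgraph of $G$. A standard observation gives that $H$ has minimum degree at least $k$: any vertex $w$ of degree less than $k$ in $H$ could be colored from the $k$-element list $L(w)$ after $L$-coloring $H-w$ by criticality, contradicting the non-$L$-colorability of $H$. Moreover, $H$ inherits girth at least $g$ from $G$, so the structural prerequisites for containing a $k$-proper rooted tree are in place.

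In the odd case, I would fix a root $v \in V(H)$ and apply Lemma \ref{lem:charac} with $v_1 = v$, obtaining an $L$-coloring $\varphi$ of $H - v$ and the induced rank function $R$. The tree $T$ is then grown by a BFS starting from $v$: at the root, condition (ii) of Lemma \ref{lem:charac} lets me pick, for each $c \in L(v)$, a neighbor of $v$ colored $c$, giving $k$ first-level children and immediately satisfying condition (i) of tree-badness. Inductively, at a tree vertex $x$ of depth $1 \leq i \leq \lfloor (g-3)/2 \rfloor$ with $\varphi(x) = c$, I would pick for each $d \in L(x) \setminus \{c\}$ a neighbor of $x$ in $H$ colored $d$, producing exactly $k-1$ tree children of $x$. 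The girth hypothesis ensures that the chosen vertices at successive levels are distinct from each other and from previously selected vertices, so the resulting structure is genuinely a rooted $k$-proper tree.

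The main obstacle is that Lemma \ref{lem:charac}(iii) only guarantees that every color $d \in L(x) \setminus \{\varphi(x)\}$ either (a) is realized by a neighbor of $x$ or (b) lies in the list of a lower-rank neighbor of $x$; case (b) does not produce the color-matched child required for tree-badness. To rule out (b) at every non-leaf tree vertex, I would choose $\varphi$ extremally among $L$-colorings of $H-v$ (for example lexicographically minimizing a rank-indexed vector that counts invocations of (iii)(b) at each rank), and argue by a Kempe-chain-style recoloring along a shortest $(\varphi,L)$-alternating path witnessing (b) that any such invocation at rank $i \leq \lfloor (g-3)/2 \rfloor$ can be converted into a strictly smaller coloring in this lexicographic order; the girth hypothesis is crucial here, as it confines any lower-rank neighbor of $x$ to be an ancestor in the BFS tree and keeps the recoloring local. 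If the reduction procedure cannot be applied anywhere, the resulting extremal $\varphi$ has option (iii)(a) holding at every non-leaf tree vertex, which delivers the tree-bad property; otherwise the recoloring sequence terminates by producing an $L$-coloring of all of $H$, contradicting $L$-criticality. The even case is handled analogously by fixing an edge $uv \in E(H)$ and coordinating $L$-colorings of $H - v$ and $H - u$ to grow the two odd $k$-proper subtrees $T_v$ and $T_u$ simultaneously, joined by the edge $uv$.
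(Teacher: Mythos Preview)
Your high-level plan mirrors the paper's proof: pass to an $L$-critical subgraph $H$, fix a vertex $v$, take an $L$-coloring $\varphi$ of $H-v$, and grow a level-by-level tree from $v$ in which each non-leaf vertex has children realising all colours of its list (except its own colour). The paper also resolves the obstruction by a recolouring argument, so the overall strategy is the right one.

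However, there is a concrete gap in your plan. You write that if option~(iii)(a) of Lemma~\ref{lem:charac} holds at every non-leaf tree vertex, then the tree-bad property follows. This is not true: (iii)(a) only guarantees \emph{some} neighbour of $x$ coloured $d$, and for $d=\varphi(\mathrm{parent}(x))$ (which may well lie in $L(x)$) that neighbour is the parent, not a child at level $i+1$. Tree-badness requires the colours $L(x)\setminus\{\varphi(x)\}$ to be realised specifically by level-$(i{+}1)$ neighbours. So even an extremal $\varphi$ with no invocations of~(iii)(b) need not yield a tree-bad tree. Your lexicographic measure, which only counts invocations of~(iii)(b), simply does not see this obstruction, and your reduction step is never triggered in this case.

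Relatedly, the recolouring step itself is asserted rather than carried out. Saying ``Kempe-chain-style recoloring along a shortest $(\varphi,L)$-alternating path'' does not specify which vertices change colour, to what, or why the result is still a proper $L$-colouring of $H-v$ that strictly decreases your measure. The paper's proof is precise here: it defines ``free colours'', takes a path-maximal partial tree, and builds for each neighbour $a_i$ of $v$ a \emph{correct colorful tree} in which every leaf has a free colour; it then recolours each leaf with its free colour and each internal vertex with its children's old colour, thereby removing the colour $c_1$ from the neighbourhood of $v$ and contradicting criticality. This is exactly the place where the parent-colour issue is handled (the parent is simultaneously recoloured), and it is the substantive part of the argument. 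Your sketch does not supply an equivalent mechanism. The even case, where you propose to ``coordinate $L$-colorings of $H-v$ and $H-u$'', is vaguer still and would need a genuine argument to ensure the two halves fit together.
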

\begin{proof}
	We shall prove the lemma in the case when $g$ is odd. The case
	when $g$ is even can be done similarly.
	If $G$ is not $L$-colorable, then it contains a vertex-minimal
	conncected induced $L$-critical subgraph $H$. 
	Let $v$ be a vertex of $H$; then
	$H-v$ has an $L$-coloring $\theta$. For such an $L$-coloring $\psi$
	of $H-v$, let
	$W^{\psi}$ be the set of vertices $u$ in $H$ for which
	there is an $(\psi,L)$-alternating path of length
	at most $\frac{g-1}{2}$ with origin at $v$ and terminus $u$. 
	For a vertex $u$ in $H[W^{\theta}]$ we say that $u$ is {\em at level $r$}
	if the distance between $u$ and 
	$v$ is $r$.
	We shall prove by contradiction that
	$H[W^{\theta}]$ contains
	an tree-bad odd rooted $k$-proper tree $T$.
		
	If $\psi$ is an $L$-coloring of $H-v$,
	then the subgraph of $H[W^{\psi}]$ induced by all vertices
	at level at most $(g-3)/2$ in $H[W^{\psi}]$ is a tree and
	no two vertices at level $(g-3)/2$ 
	has a common neighbor at level $(g-1)/2$,
	because $G$ has girth $g$.
	
	Now consider the subgraph $H[W^{\theta}]$.
	For each color $c \in L(v)$, there is clearly a neighbor of $v$ 
	that is colored $c$ under $\theta$
	(since otherwise $H$ is $L$-colorable).
	Suppose that $H[W^{\theta}]$ does not 
	contain a tree-bad odd rooted $k$-proper tree.
	We shall prove that this implies that $H$ is $L$-colorable,
	establishing the required contradiction.
	Let $\mathcal{T}$ be the set of all subgraphs $T_{\theta}$
	of $H[W^{\theta}]$ such that
	
	\begin{itemize}
	
	\item[(a)] $v \in V(T_{\theta})$, 
	
	\item[(b)] either $0$ or exactly $k$ neighbors of $v$ 
	are in $T_{\theta}$, and
	in the latter case $L(v)=\{\theta(x): x \in N_{T_{\theta}}(v)\}$,
	
	\item[(c)] for every vertex $u$ of $T_{\theta}$ at level $q$, 
	where $1 \leq q \leq \frac{g-3}{2}$,
	$T_{\theta}$ contains exactly $0$ or
	$k-1$ neighbors of $u$ at level $q+1$,
	
	\item[(d)] for every vertex $u \in V(T_{\theta})$
	and every color $c \in L(u) \setminus \{\theta(u)\}$, if
	$u$ is at level $q$, where $1 \leq q \leq \frac{g-3}{2}$, 
	and $u$ has
	$k-1$ neighbors in $T_{\theta}$ at level $q+1$,
	then there is a neighbor $x$ of $u$ at level
	$q+1$ with $\theta(x) = c$.
	
	\end{itemize}
	We choose an element $T_{\theta}$ from 
	$\mathcal{T}$ such that the shortest
	maximal path with origin at $v$ in $T_{\theta}$ has maximum
	length. If $\mathcal{T}$ contains several elements with
	shortest maximal paths of equal length, then we choose $T_{\theta}$
	to be an element of $\mathcal{T}$ with the minimum number
	of such paths.
	We say that such a subgraph of $H[W^{\theta}]$
	is {\em path-maximal}.
	
	If each maximal path in $T_{\theta}$ with origin at $v$ has length
	at least $\frac{g-1}{2}$, then $H[W^{\theta}]$ 
	clearly contains a tree-bad odd rooted
	$k$-proper tree; so suppose
	that there is some maximal path in $T_{\theta}$ with origin at $v$
	of length strictly less than $\frac{g-1}{2}$.
	%
	Let $P=v w_1 w_2\dots w_j$ be such a shortest path in $T_{\theta}$
	and suppose further that $\theta(w_i) = c_i$, $i=1,\dots,j$.
	
	Now, if $v = w_j$,
	then $H$ is $L$-colorable; 
	a contradiction and the desired result follows.
	So suppose that $v \neq w_j$. 
	Since $T_{\theta}$ is path-maximal,
	there is a color $c_{j+1} \in L(w_j) \setminus \{\theta(w_j)\}$ 
	such that no vertex in $H[W^{\theta}]$ at level $j+1$
	is adjacent to $w_j$ and colored $c_{j+1}$. We call
	such a color a {\em free color} of $w_j$. In fact,
	since $T_{\theta} \in \mathcal{T}$ and $T_{\theta}$ is
	path-maximal, 
	there is some color $c \in L(w_{j-1}) \setminus \{\theta(w_{j-1}) \}$
	such that each neighbor of $w_{j-1}$ at level $j$ 
	in $H[W^{\theta}]$ colored
	$c$ has a free color.

	Let $H'$ be the subgraph of $H[W^{\theta}]$ consisting of
	all vertices $u$ for which there is a $(\theta,L)$-alternating path
	of length at most $j$ with origin at $v$, terminus at $u$
	and whose second vertex is colored $c_1$.
	Note that $H'$ is a tree and denote by $\{a_1, \dots, a_r\}$
	the set of neighbors of $v$ in $H'$. 
	For $i=1,\dots,r$, denote by $H'_i$ the subgraph of $H'$
	consisting of
	all vertices $u$ for which there is a $(\theta,L)$-alternating path
	of length at most $j$ with origin at $v$, terminus at $u$
	and whose second vertex is $a_i$.
	
	For $i=1,\dots,r$, let $T_i$ be a path-maximal tree in $H'_i$
	satisfying (a) and (c)-(d) (with $T_i$ in place of $T_{\theta}$)
	and the additional condition that $a_i \in V(T_i)$. 
	Since $T_{\theta} \in \mathcal{T}$ and $T_{\theta}$ is path-maximal,
	it follows that each $T_i$ contains a maximal path
	$P_i = v z^{(i)}_{1} \dots z^{(i)}_{j_i}$ of length at most 
	$j$ such that $a_i = z^{(i)}_{1}$, and 
	$z^{(i)}_{j_i}$ has a free color; and,
	more generally, there is a color 
	$c^{(i)} \in L(z^{(i)}_{j_i-1}) \setminus \{ \theta(z^{(i)}_{j_i-1}) \}$, 
	such that any vertex at level 
	$j_i$ in $H_i'$ that is
	adjacent to $z^{(i)}_{j_i-1}$ and colored $c^{(i)}$ has a free color.
	We call such a path in $T_i$ {\em good}.

	We shall now prove that
	there is an $L$-coloring $\varphi$ of $H-v$ such that 
	$v$ is not adjacent to any vertex colored $c_1$,
	and thus there is an $L$-coloring of $H$.

	For $i \in \{1,\dots, r\}$, a {\em colorful tree} $J$ with root $a_i$ is 
	a tree in $H'_i-v$ such that:
	
	\begin{itemize}
		
		\item $a_i \in V(J)$ and
		all vertices of $J$ lie on $(\theta,L)$-alternating paths
		with origin at $v$, and
	
		\item if a vertex $u \in V(J)$ at level $q$, $1 \leq q \leq j-1$
		has a neighbor in $J$ at level $q+1$ colored $c$ under $\theta$, then
		$J$ contains all neighbors of $u$ at level $q+1$ with color
		$c$ under $\theta$; and, moreover, all neighbors of $u$ in $J$ at level
		$q+1$ is colored $c$ under $\theta$.
	
	\end{itemize}
	
	A colorful tree $J$ with root $a_i$ in $H'$ is {\em correct} 
	if for each maximal
	path $P'$ in $J$ with origin at $a_i$, the terminus of $P'$
	has a free color.

	\begin{claim}
	\label{cl:corrcoltree}
		For $i=1,\dots, r$, the graph $H'_{i}$ contains a correct colorful tree.
	\end{claim}
	
	This claim can easily be proved by using the fact that 
	each $T_i$ contains a good path.
	Since suppose that
	there is no correct colorful tree in $H'_i$ for
	some $i \in \{1,\dots,r\}$.
	Then every colorful tree in $H'_i$ is not correct, which
	contradicts that $T_i$ satisfies (a), (c), (d), is path-maximal, 
	and has a good path.
	
	\bigskip
	
	By the claim above, for $i=1,\dots,r$, $H'_i$ contain a correct 
	colorful tree $J_i$.
	By recoloring all leaves of each $J_i$ with its free color, recoloring
	all other vertices of $J_i$ with the (unique) color of its children,
	and retaining the color of every other vertex of $H-v$,
	we obtain an $L$-coloring $\varphi$ of $H-v$ such that 
	$v$ is not adjacent to any vertex colored $c_1$,
	implying the required contradiction.
	\end{proof}

\begin{remark}
If $T$ is a rooted $k$-proper tree with root $v$ in a graph $G$, and $L$
is a list assignment for $G$ such that $T$ is tree-bad, 
and we define $R(x)$ to be distance
between $v$ and $x$ for any vertex $x$ of $T$, then $(T,v,R)$ is an $L$-bad
proper triple according to the definition in Section 2. So for graphs
with girth $g > 3$, Lemma \ref{lem:badtree} provides
a stronger characterization of which list assignments do not
contain a proper coloring of the graph compared to Lemma \ref{lem:charac}.
\end{remark}

Let us now prove Proposition \ref{prop:girth2}.

\begin{proof}[Proof of Proposition \ref{prop:girth2}.]
	Let $G= G(n)$ be a graph on $n$ vertices
	with maximum degree $\Delta = \Delta(n)$
	and girth at least $g$, where $g$ is a fixed positive integer.
	Suppose further that $k$ is a fixed integer satisfying $k \geq 3$ and
	that $L$ is a random $(k,\mathcal{C})$-list assignment for $G$.
	We need to prove that if  
	$\sigma = \omega\left(n^{\frac{1}{Q(k)-1}} \Delta\right)$,
	where $Q(k)$ is given by \eqref{eq:Qk},
	then {\bf whp} $G$ is $L$-colorable.
	By Lemma \ref{lem:badtree}, it suffices to prove that
	{\bf whp} $G$ does not contain a tree-bad odd (even) rooted
	$k$-proper tree if
	$g$ is odd (even).
	
	We will prove the proposition in the case when $g$ is odd;
	the case when $g$ is even is similar.
	
	As pointed out above, a rooted $k$-proper tree
	has $Q(k)$ vertices, where $Q(k)$ is given by
	\eqref{eq:Qk}. Hence, the number of odd rooted 
	$k$-proper trees in $G$ is at most $n \Delta^{Q(k)-1}$.
	Given such a tree $T$ with root $v$, the number of ways of choosing
	the list assignment $L$ for $T$ such that $T$ is $(L,\text{ tree})$-bad
	is at most 
	$$\sigma^{Q(k)-1} \binom{\sigma-1}{k-1}^{k(k-1)^{\frac{g-3}{2}}},$$
	because there are less than $\sigma^{Q(k)-1}$ ways of choosing
	a proper coloring $\varphi$ of $T-v$, and there
	are thereafter at most $\binom{\sigma-1}{k-1}^{k(k-1)^{\frac{g-3}{2}}}$
	ways of choosing the remaining colors of the lists for the vertices
	at distance $\frac{g-1}{2}$ from $v$. Note that by conditions (i)
	and (ii) above, the list of every vertex at 
	distance at most $\frac{g-3}{2}$
	from $v$ in $T$ is determined, as soon as we have chosen 
	the coloring $\varphi$
	of $T-v$.
	
	Denote by $X$ a random variable counting the number
	of tree-bad odd rooted $k$-proper trees in $G$.
	By the preceding paragraphs we have
	
	\begin{align*}
	\mathbb{E}[X]  & \leq \frac{n \Delta^{Q(k)-1}
				\sigma^{Q(k)-1} \binom{\sigma-1}{k-1}^{k(k-1)^{\frac{g-3}{2}}}}
	{\binom{\sigma}{k}^{Q(k)}}
	 \\
	& \leq A \frac{n \Delta^{Q(k)-1}  k^{2 Q(k)}}
	{ \sigma^{Q(k)-1}} \nonumber \\
	 & =o(1),
	\end{align*}
	where $A$ is some constant independent of $n$,
	because $k$ is a fixed positive integer
	and $\sigma = \omega\left(n^{\frac{1}{Q(k)-1}} \Delta\right)$.
\end{proof}

Finally, we have the following result for graphs with
	large girth.
	
	\begin{proposition}
	\label{prop:verylargegirth}
		Let $G= G(n)$ be a graph on $n$ vertices 
		with maximum degree at most $\Delta$ (where $\Delta$
		is either constant or an increasing function of $n$)
		and girth $g=g(n)$ and let
		$L$ be a random $(k,\mathcal{C})$-list assignment for $G$.
		Then
		
		\begin{itemize}
		
		\item[(i)] if $k = 2$, $g \geq C \log n$, where $C > 0$ 
		is a fixed constant,
		 and
		$\sigma(n) \geq A \Delta \log n$, where $A=A(C)$ is a fixed constant
		satisfying $A > 4e^{1/C}$,
		then {\bf whp} $G$ is $L$-colorable;
		
		\item[(ii)] for each constant $C_2 >0$, there
		are constants $k_0 = \lceil \exp(2/C_2)+1 \rceil$ and  
		$B_0 = \exp(\frac{k_0-2}{2}) k_0^2$ 
		such that if
		$g \geq C_2 \log \log n$,
		$k \geq k_0$, $B > B_0$ and
		$\sigma(n) \geq B \Delta$, 
		then {\bf whp} $G$ is $L$-colorable.
		
		\end{itemize}
	\end{proposition}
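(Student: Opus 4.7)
For part (i), by Lemma~\ref{lem:2list} it suffices to show that {\bf whp} $G$ contains no $2$-bad proper pair. Any such pair contains a cycle, so has at least $g$ vertices. Mimicking the proof of Theorem~\ref{th:main2} but truncating the outer sum at $l \geq g$, Lemmas~\ref{lem:2bad} and~\ref{lem:2numbersubgraphs} give
\[
\mathbb{E}[X] \leq \sum_{l=g}^{n} \sum_{r=0}^{l} n \Delta^{l-1+r} 2^l \cdot \frac{2^{l+2r}}{\sigma^{l-1}(\sigma-1)^{2+r}} = O\!\left(\frac{n}{\sigma^2}\left(\frac{4\Delta}{\sigma}\right)^{g-1}\right),
\]
since $\sigma \geq A\Delta\log n$ makes both the inner sum over $r$ and the outer geometric tail in $l$ bounded. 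Plugging in $g \geq C\log n$, one obtains $\mathbb{E}[X] = O(n^{1-C\log(A/4)}/\sigma^2)$, which is $o(1)$ precisely when $A > 4 e^{1/C}$.

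For part (ii), by Lemma~\ref{lem:badtree} it suffices to show {\bf whp} $G$ contains no tree-bad rooted $k$-proper tree (odd or even according to the parity of $g$). Reusing the expectation estimate from the proof of Proposition~\ref{prop:girth2}, one has
\[
\mathbb{E}[X] \leq \frac{n\Delta^{Q(k)-1}\sigma^{Q(k)-1}\binom{\sigma-1}{k-1}^L}{\binom{\sigma}{k}^{Q(k)}} \leq A' n \left(\frac{k^2}{B}\right)^{Q(k)-1}
\]
whenever $\sigma \geq B\Delta$, where $L$ is the number of leaves and $A'$ is an absolute constant; thus it suffices that $(Q(k)-1)\log(B/k^2) > (1+o(1))\log n$. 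Since $k \geq k_0 = \lceil \exp(2/C_2)+1 \rceil$ gives $\log(k-1) \geq 2/C_2$, the assumption $g \geq C_2\log\log n$ yields $(k-1)^{g/2} \geq \exp((g/2) \cdot 2/C_2) \geq \log n$, whence $Q(k) \geq 2\log n/(k-2)$ (treating even $g$; the odd $g$ case is identical). For $k = k_0$ the choice $B > B_0 = k_0^2\exp((k_0-2)/2)$ gives $\log(B/k_0^2) > (k_0-2)/2$, so $(Q(k_0)-1)\log(B/k_0^2) > \log n$ and $\mathbb{E}[X] \to 0$.

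The main obstacle is the calibration of constants in (ii): the specific values $k_0 = \lceil\exp(2/C_2)+1\rceil$ and $B_0 = k_0^2\exp((k_0-2)/2)$ are what make $(k-1)^{g/2}$ first reach the $\log n$ scale and then $(Q(k)-1)\log(B/k^2)$ surpass $\log n$. Part (i) is comparatively routine once one observes that the factor of $\log n$ in the lower bound on $\sigma$ is strong enough that the geometric tails in the expectation bound can be absorbed into absolute constants.
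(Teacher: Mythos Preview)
Your proposal is correct. For part (ii) you follow the paper's argument essentially verbatim: reduce to $k=k_0$, apply Lemma~\ref{lem:badtree}, reuse the expectation estimate from Proposition~\ref{prop:girth2}, and calibrate the constants via $Q(k)\geq \frac{2}{k-2}\bigl((k-1)^{g/2}-1\bigr)$.

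For part (i) you take a genuinely different route. The paper argues through Lemma~\ref{lem:charac}: an $L$-critical subgraph for $k=2$ has minimum degree $2$, hence (since $G$ has girth $g$) at least $g$ vertices, and one then bounds the expected number of $(\varphi,L)$-alternating paths on at least $g$ vertices exactly as in~\eqref{eq:pathsum}. You instead invoke the Section~3 machinery: a $2$-bad proper pair necessarily contains a cycle and therefore has at least $g$ vertices, so the double sum from the proof of Theorem~\ref{th:main2} may be truncated at $l\geq g$, just as in Proposition~\ref{prop3}. Both approaches lead to a geometric sum with ratio $4\Delta/\sigma$ and first term of order $n(4\Delta/\sigma)^{g-1}$, and both recover the same threshold constant $A>4e^{1/C}$. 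Your route is arguably cleaner here, since the containment of a cycle in a $2$-bad proper pair is immediate from the definition, whereas the paper's step linking minimum degree~$2$ to a long alternating path is left somewhat implicit.
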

	
	\begin{proof}[Proof (sketch).]
		We first prove part (i).
		We shall prove that {\bf whp} $G$ has no $L$-coloring $\varphi$
		such that it contains a $(\varphi,L)$-alternating path of
		length at least $g$. 
		Since an $L$-critical induced subgraph of $G$ has minimum
		degree $2$, by Lemma \ref{lem:charac},
		this means that $G$ has no $L$-critical induced subgraph,		
		which implies the desired result.
		
	As in the proof of Theorem \ref{th:main1}
	the expected number of paths $P$ in $G$
	on at least
	$g$ vertices, for which there is an $L$-coloring $\varphi$ of $P-v$,
	where $v$ is the origin of $P$,
	such that $P$ is $(\varphi, L)$-alternating is at most
	
	\begin{equation}
\label{eq:pathsum2}
	 \sum_{r=g}^{n} \frac{n \Delta^{r-1}k^{2r}}{\sigma^{r-1}}
	= o(1),
	\end{equation}
	provided that $k=2$,
	$g \geq C \log n$ and $\sigma(n) \geq A \Delta \log n$,
	where  $A > 4e^{1/C}$.

		Let us now prove part (ii). Evidently, it 
		suffices to prove the theorem in the case when 
		$k = k_0$. The proof is almost identical
		to the proof Proposition \ref{prop:girth2}.
		Arguing as in that proof it suffices to prove that
		the expression
\begin{align}		
		&\frac{n \Delta^{Q(k)-1} 
				\sigma^{Q(k)-1} \binom{\sigma-1}{k-1}^{k(k-1)^{\frac{g-3}{2}}}}
	{\binom{\sigma}{k}^{Q(k)}}
	 \nonumber \\
	 \leq & \frac{n \Delta^{Q(k)-1}  k^{2 Q(k)}}
	{ \sigma^{Q(k)-1}} \label{eq:sista}
\end{align}
		tends to $0$ as $n\to\infty$.
		Noting that for large $g$, $Q(k) \geq \frac{2}{k-2}((k-1)^{g/2} -1 )$,
		it is easily verified that \eqref{eq:sista} tends to
		$0$ as $n \to \infty$ provided that
		$k \geq \exp(2/C_2)+1 $,
		$\sigma(n) \geq B \Delta$  and
		$g \geq C_2 \log \log n$,
		where $B > B_0= \exp(\frac{k_0-2}{2}) k_0^2$.
	\end{proof}


\section{Random lists of non-constant size}

	In this section we demonstrate how the methods from
	Section 2 and 3 can be used for proving some analogous results 
	on list coloring
	when the size $k$ of the lists in a random $(k,\mathcal{C})$-list
	assignment is a (slowly) increasing function of $n$.
	We shall derive such analogues of several
	of the results in the preceding
	sections.
	
	There are some previous results on coloring graphs
	from random lists of non-constant size in the literature:
	In \cite{AndrenCasselgrenOhman} it is proved that
	there 
	is a constant $c >0$ such that 
	if $L$ is a random $(k,\{1,\dots, n\})$-list,
	assignment for $\mathcal{L}(K_{n,n})$,
	where $\mathcal{L}(K_{n,n})$ is the line graph of
	the balanced complete bipartite graph on $n+n$ vertices
	and $k > (1-c)n$, then {\bf whp} there is an $L$-coloring
	of $\mathcal{L}(K_{n,n})$. Note that in \cite{AndrenCasselgrenOhman}
	this result is formulated in the language of arrays and Latin squares.
	
	In \cite{CasselgrenHaggkvist} it is proved that
	for the complete graph $K_n$ on $n$ vertices 
	the property of being colorable from a random
	from a random $(k,\{1,\dots, n\})$-list assignment 
	has a sharp threshold at
	at $k=\log n$. Moreover, a similar
	result for the line graph of the complete bipartite graph
	$K_{m,n}$ with parts of size $m$ and $n$,
	where $m =o(\sqrt n)$ is also proved.

	Let us now prove an analogue of Theorem \ref{th:main1}.
	The proofs of all of the results in this section
	follow proofs in Section 2-4 quite
	closely, so in general, we 
	omit proofs or just provide brief sketches.
	Througout this section we assume
	that $G=G(n)$ is a graph on $n$ vertices 
	with maximum degree
	at most $\Delta$, and $L$ a random 
	$(k,\mathcal{C})$-list assignment for $G$,
	where $k=k(n)$ and $\Delta = \Delta(n)$
	are non-constant increasing integer-valued functions of $n$
	satisfying $k < \Delta$.

	\begin{theorem}
	\label{th:nonconstant1}
		Suppose that $k = O(\log^{1/4} n)$ and
		$\Delta = O(n^{1/3k^3})$. For any $\epsilon >0$
		the following holds:
		
		\begin{itemize}
		
			\item[(i)] If $k = o(\log \Delta)$ and 
			$\sigma(n) \geq (1+ \epsilon)n^{1/k^2} \Delta^{1/k} k$,
		then {\bf whp} $G$ is $L$-colorable;
		
			\item[(ii)] if $k = C \log \Delta$ and 
			$\sigma(n) \geq (1+ \epsilon)n^{1/k^2} \exp(1/C) k$,
			then {\bf whp} $G$ is $L$-colorable;
			
			\item[(iii)] if $k = \omega(\log \Delta)$ and
			$\sigma(n) \geq (1+ \epsilon)n^{1/k^2} k$,
			then {\bf whp} $G$ is $L$-colorable.
		
		\end{itemize}
		
	\end{theorem}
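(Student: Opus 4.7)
The plan is to adapt the proof of Theorem~\ref{th:main1}, tracking constants carefully since $k$ is no longer fixed. By Lemma~\ref{lem:charac}, it suffices to show that {\bf whp} $G$ contains no $L$-bad proper triple $(F,v_1,R)$. First I would handle the base case $|V(F)|=k+1$: then $F$ must be a $(k+1)$-clique whose vertices share a common list, contributing at most
$$n \Delta^k \binom{\sigma}{k}^{-k} \leq n \Delta^k \Big(\frac{k}{\sigma}\Big)^{k^2}$$
to the expected count, using $\binom{\sigma}{k} \geq (\sigma/k)^k$. In regime (i), plugging in $\sigma \geq (1+\epsilon) n^{1/k^2} \Delta^{1/k} k$ reduces this to $(1+\epsilon)^{-k^2}$, which tends to $0$ since $k\to\infty$. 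In regime (ii) the identity $\Delta^{1/k} = e^{1/C}$, valid when $k = C\log\Delta$, produces exactly the same estimate; in regime (iii), $k = \omega(\log\Delta)$ forces $\Delta^{1/k} = 1 + o(1)$, so the stated hypothesis absorbs a $\Delta^{1/k}$ factor for $n$ large.

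Next, I would replicate the alternating-path argument. The expected number of paths $P$ on $r \geq k^2+k+1$ vertices with origin $v$ admitting an $L$-coloring $\varphi$ of $P-v$ under which $P$ is $(\varphi,L)$-alternating is at most $\sum_{r \geq k^2+k+1} n\Delta^{r-1}k^{2r}/\sigma^{r-1}$. Under $\Delta = O(n^{1/3k^3})$ and any of the three hypotheses on $\sigma$, the common ratio $\Delta k^2/\sigma$ is $o(1)$, and a direct expansion shows that the leading term has the form $k^{\Theta(k^2)} n^{-1-\Omega(1/k)}$. Since $k = O(\log^{1/4} n)$, the factor $k^{\Theta(k^2)}$ is of order $n^{o(1)}$, so the whole expression tends to $0$. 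Hence {\bf whp} every bad proper triple $(F,v_1,R)$ satisfies $|V(F)| \leq \Delta^{k^2+k}$.

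Finally, I would combine Lemmas~\ref{lem:numbersubgraphs} and~\ref{lem:bad} exactly as in the closing calculation of Theorem~\ref{th:main1}, bounding the expected number of bad proper triples by
$$O\Big(\frac{n}{\sigma\Delta}\Big)\sum_{m=k+2}^{\Delta^{k^2+k}} \Big(\frac{m k^k \Delta^k}{\sigma^{k-1}}\Big)^m.$$
Substituting $\sigma^{k-1} \geq (1+\epsilon)^{k-1} n^{(k-1)/k^2} \Delta^{(k-1)/k} k^{k-1}$ together with $\Delta \leq n^{1/3k^3}$, one verifies that $m k^k \Delta^k/\sigma^{k-1}$ at $m = \Delta^{k^2+k}$ is at most $n^{-\Omega(1/k)}$, so the geometric sum is dominated by its leading term. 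After multiplying by $O(n/(\sigma\Delta))$, the total is at most $k^{O(k)}(1+\epsilon)^{-\Omega(k^2)} n^{-\Omega(1/k)}$, which is $o(1)$ since $k = O(\log^{1/4} n)$.

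The hard part will be the careful bookkeeping of explicit constants: unlike in Theorem~\ref{th:main1}, where $k$ is fixed and $\sigma = \omega(\cdot)$ provides ample slack, here the $(1+\epsilon)$ factor in the hypothesis must be tracked through every estimate so that $(1+\epsilon)^{-k^2}$-type savings can be harnessed at each stage. The cap $k = O(\log^{1/4} n)$ is calibrated precisely so that polynomial-in-$k$ losses of the form $k^{O(k)}$ and $k^{\Theta(k^2)}$ remain $n^{o(1)}$, while the bound $\Delta = O(n^{1/3k^3})$ plays the role of $\Delta = O(n^{(k-1)/(k(k^3+2k^2-k+1))}) \sim n^{1/k^3}$ from Theorem~\ref{th:main1}. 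Cases (ii) and (iii) then reduce to the pointwise identity $\Delta^{1/k} = e^{1/C}$ and the limit $\Delta^{1/k} \to 1$, respectively.
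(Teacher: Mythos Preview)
Your proposal is correct and follows exactly the approach the paper takes: the paper explicitly states that the proof is ``almost identical to the proof of Theorem~\ref{th:main1}'' and omits all details, noting only that the same arguments and calculations go through because $k$ is small relative to $\sigma$ and $n$. Your write-up in fact supplies more detail than the paper does, correctly identifying that the $(1+\epsilon)$ slack yields $(1+\epsilon)^{-k^2}$-type decay and that the cap $k=O(\log^{1/4} n)$ keeps the $k^{\Theta(k^2)}$ losses at $n^{o(1)}$, with cases (ii) and (iii) reducing to (i) via $\Delta^{1/k}=e^{1/C}$ and $\Delta^{1/k}\to 1$ respectively.
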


		The proof of the above theorem is almost identical
		to the proof of Theorem \ref{th:main1}.
		Since $k$ is relatively small compared to $\sigma$ and $n$, 
		the same arguments and calculations
		as in the proof of Theorem \ref{th:main1} yield the
		required conclusions, given that $k, \Delta$ and $\sigma$
		satisfy the above conditions. The details are
		omitted.

	Note further that the example after 
	Thereom \ref{th:main1},
	shows that the bound on $\sigma$ in the above theorem is 
	best possible 
	up to the multiplicative
	factor $k$,
	provided that $k = o(\Delta^{1/2})$.

	\bigskip
	
	Next, we have the following 
	analogue of Proposition \ref{prop1}
	for random lists of non-constant size.

	\begin{proposition}
	\label{th:nonconstant2}
		Let
		$\alpha$ and $s$ be constants
		satisfying $1 < \alpha \leq 3$
		and $s \geq 2 + \gamma$, for some $\gamma >0$.
				For any $\epsilon>0$, if
		
		\begin{itemize} 
		
			\item $k = O\left(\log^{1/\alpha-\delta} n \right)$
				for some $\delta>  0$, 
			
			\item $\Delta = O\left(n^{1/k^{\alpha}}\right)$, and
		
			\item $\sigma(n) \geq (1+ \epsilon) n^{1/k^{
			\frac{\alpha+1}{2}}} \Delta^s k $,
		
		\end{itemize}
		then $G$ is $L$-colorable.

	\end{proposition}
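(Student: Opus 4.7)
The plan is to follow the sketch of Proposition~\ref{prop1} essentially verbatim, with bookkeeping adjustments to handle the fact that $k=k(n)\to\infty$ (slowly). By Lemma~\ref{lem:charac} it suffices to prove that {\bf whp} $G$ contains no $L$-bad proper triple; all bounds are first-moment computations fed by Lemmas~\ref{lem:numbersubgraphs} and~\ref{lem:bad}.

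First I would redo the $(\varphi,L)$-alternating path bound. The expected number of paths on at least $k^{(\alpha+1)/2}+1$ vertices that are alternating for some $L$-coloring is at most
\[
\sum_{r\geq k^{(\alpha+1)/2}+1}\frac{n\Delta^{r-1}k^{2r}}{\sigma^{r-1}}.
\]
With $\sigma\geq(1+\epsilon)n^{1/k^{(\alpha+1)/2}}\Delta^{s}k$ and using $k<\Delta$, the ratio of successive terms is at most $k^{2-s}/((1+\epsilon)n^{1/k^{(\alpha+1)/2}})$, and the first term is at most $k^{(2-s)k^{(\alpha+1)/2}+2}$; once $s\geq 2+\gamma$ and $k$ is large, both are $o(1)$. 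Therefore, {\bf whp} every subgraph $F$ belonging to a bad proper triple has at most $\Delta^{k^{(\alpha+1)/2}}$ vertices.

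I would then run the main first-moment estimate exactly as in the proof of Proposition~\ref{prop1}, ending up with the bound
\[
O\!\left(\frac{n\Delta^{k(k+1)}}{\sigma^{k^{2}}\Delta}\right)\sum_{m=0}^{\infty}\rho^{m},\qquad \rho=\frac{\Delta^{k^{(\alpha+1)/2}+k}k^{k}}{\sigma^{k-1}}.
\]
Substituting $\sigma\geq(1+\epsilon)n^{1/k^{(\alpha+1)/2}}\Delta^{s}k$, the factor $k^{k-1}$ produced by $\sigma^{k-1}$ cancels all but one power of the $k^{k}$ in $\rho$, yielding
$\rho\leq\Delta^{k^{(\alpha+1)/2}+k-s(k-1)}\,k/\bigl((1+\epsilon)^{k-1}n^{(k-1)/k^{(\alpha+1)/2}}\bigr)$.
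Using the identity $k^{(\alpha+1)/2}-(k-1)k^{(\alpha-1)/2}=k^{(\alpha-1)/2}$ together with $s\geq 2+\gamma$, the effective exponent of $n$ inside $\rho$ becomes at most $-c/k^{\alpha-1}$ for some $c=c(\gamma,\alpha)>0$, and combined with $k/(1+\epsilon)^{k-1}\to 0$ this forces $\rho\to 0$. The prefactor $n\Delta^{k(k+1)}/(\sigma^{k^{2}}\Delta)$ is $o(1)$ for analogous reasons, so the whole expression is $o(1)$ and Markov's inequality finishes the proof.

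The main obstacle is managing the non-constant $k$: in Proposition~\ref{prop1} the term $k^{k}$ was absorbed into a multiplicative constant, whereas here it must genuinely be paid for. The extra factor of $k$ in the hypothesis on $\sigma$ supplies the compensating $k^{k-1}$ via $\sigma^{k-1}$; the $(1+\epsilon)$ factor produces an exponential reserve $(1+\epsilon)^{k-1}$ large enough to swallow the residual polynomial $k$; and the condition $k=O(\log^{1/\alpha-\delta}n)$ keeps $\Delta^{k^{(\alpha+1)/2}}$ sufficiently sub-polynomial in $n$ that the $n$-exponent $-c/k^{\alpha-1}$ still drives $\rho$ to zero. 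Verifying that these three devices remain balanced throughout the range $1<\alpha\leq 3$ is where most of the careful bookkeeping lies; once done, the argument closes as above.
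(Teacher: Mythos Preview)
Your proposal is correct and takes essentially the same approach as the paper: the paper itself omits the proof entirely, remarking only that it is ``virtually identical to the proof of Proposition~\ref{prop1}'', and your plan does precisely this, reproducing that first-moment argument via Lemmas~\ref{lem:charac}, \ref{lem:numbersubgraphs} and~\ref{lem:bad} while tracking the three devices (the extra factor $k$ in $\sigma$, the $(1+\epsilon)$ reserve, and the bound on $k$) needed to absorb the now non-constant $k^{k}$ terms.
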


	Again, the proof is virtually identical
	to the proof of Proposition \ref{prop1} and is
	therefore omitted.

For lists of greater size 
we have the following which is valid for all $\Delta$:

\begin{proposition} $ $ 
			\begin{itemize}
		
			\item[(i)]
			If 
		$k = o( \log n)$, then for any $\epsilon > 0$, if
		$\sigma(n) \geq (1+\epsilon) n^{1/k} \Delta k$,
			then {\bf whp} $G$ is $L$-colorable.
		
		\item[(ii)] If
		$k \geq C \log n$,
		where $C$ is some constant, then for any $\epsilon > 0$, if
		$\sigma \geq (1+ \epsilon) C \exp(1/C) \Delta \log n$, then
		{\bf whp} $G$ is $L$-colorable.
		\end{itemize}
\end{proposition}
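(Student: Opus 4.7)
The plan is to follow the proof of Proposition~\ref{prop2} essentially verbatim. By Lemma~\ref{lem:charac}, $G$ is $L$-colorable whenever it contains no vertex $v$ together with an enumeration $L(v)=\{c_1,\ldots,c_k\}$ and $k$ distinct neighbors $u_1,\ldots,u_k$ of $v$ satisfying $c_i\in L(u_i)$ for each $i$. The same first-moment calculation as in the proof of Proposition~\ref{prop2} bounds the expected number of such bad configurations by
\[
n\,\Delta^k\left(\frac{k}{\sigma}\right)^k \;=\; \left(\frac{n^{1/k}\,\Delta\, k}{\sigma}\right)^k.
\]

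For part (i), the hypothesis $\sigma\geq(1+\epsilon)n^{1/k}\Delta k$ turns this bound into $(1+\epsilon)^{-k}$, which tends to $0$ because $k$ is strictly increasing in $n$; Markov's inequality then concludes the argument.

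For part (ii), substituting the hypothesis $\sigma\geq(1+\epsilon)C\exp(1/C)\Delta\log n$ directly into the above bound is problematic when $k$ grows much faster than $\log n$, since the base $n^{1/k}\Delta k/\sigma$ need not be less than $1$ for such $k$. To handle all $k\geq C\log n$ uniformly, I would first invoke a standard monotonicity step: couple a uniform random $k$-list $L(v)$ as a uniform random $k$-subset of a uniform random $k'$-list $L'(v)$ whenever $k\leq k'$; then every $L$-coloring is also an $L'$-coloring, so $\mathbb{P}(G \text{ is }L\text{-colorable})\leq\mathbb{P}(G \text{ is }L'\text{-colorable})$, i.e., \textbf{whp} colorability is monotone non-decreasing in list size. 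It therefore suffices to prove (ii) for $k_0:=\lceil C\log n\rceil$. For this choice, $n^{1/k_0}=\exp(1/C)(1+o(1))$ and $k_0=C\log n\,(1+o(1))$, so the hypothesis gives $n^{1/k_0}\Delta k_0/\sigma\leq(1+o(1))/(1+\epsilon)$, making the first-moment bound at most $n^{-C\ln(1+\epsilon)+o(1)}=o(1)$.

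The only non-routine step is the coupling argument used to reduce part (ii) to the regime $k=\Theta(\log n)$; once this reduction is in place, the rest is the same computation as for Proposition~\ref{prop2}, adapted to growing $k$.
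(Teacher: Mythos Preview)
Your approach is essentially identical to the paper's: both reduce to the first-moment bound from Proposition~\ref{prop2}, arriving at the expression $n\Delta^k(k/\sigma)^k=(n^{1/k}\Delta k/\sigma)^k$ and checking it tends to~$0$. Your treatment of part~(i) matches the paper exactly.

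For part~(ii) you go further than the paper's sketch. The paper simply asserts that the same expression tends to~$0$ under the hypothesis $\sigma\geq(1+\epsilon)C\exp(1/C)\Delta\log n$, without addressing what happens when $k$ grows much faster than $\log n$. You correctly observe that in that regime the base $n^{1/k}\Delta k/\sigma$ can exceed~$1$ (since the factor $k$ in the numerator dominates the $\log n$ in $\sigma$), so the raw first-moment bound fails. Your monotonicity/coupling argument---embedding a random $k_0$-list inside a random $k$-list for $k_0=\lceil C\log n\rceil$, so that $L$-colorability is monotone in list size---is valid and cleanly reduces the general statement to the case $k=\Theta(\log n)$, where the computation goes through. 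This is a genuine clarification that the paper's sketch omits; your version is the more complete one.
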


\begin{proof}[Proof (sketch).]
	The proofs of both part (i) and (ii) are similar to the proof 
	of Proposition \ref{prop2}. 
	Arguing as in that proof, it suffices to verify that
	the expression
	$$
	n \binom{\Delta}{k}\frac{\binom{\sigma}{k} k! \binom{\sigma-1}{k-1}^k}
	{\binom{\sigma}{k}^{k+1}} = O\left(\frac{n \Delta^k k^k}{\sigma^k}   \right)
	$$
	tends to $0$ as $n \to \infty$. 
\end{proof}

\bigskip

	Finally, we remark that it is possible to
	derive corresponding results for lists of non-constant size
	of Propositions \ref{prop:girth}
	and \ref{prop:girth2} by proceeding as in the
	proofs of these propositions.
	Proposition \ref{prop:girth} is valid for non-constant $k$
	under the additional assumption that
	$$k = O \left(\log^{\frac{1}{g}-\delta} n \right), 
	\text{ for arbitrarily small $\delta >0$},$$ 
	and provided that $\sigma$ satisfies
	$$\sigma(n) \geq (1+\epsilon) n^{\frac{1}{(k-1)Q(k+1)+1}}
	\Delta^{1+ \gamma} k, \text{ for arbitrarily small
	$\gamma >0$ and $\epsilon >0$.}$$
	Similarly, Proposition \ref{prop:girth2} is valid for non-constant $k$
	provided that 
	
	\begin{itemize}
		
		\item
			$k =  o\left(  \log^{\lceil \frac{2}{g-2} \rceil}  n \right)$ and
		$\sigma(n) \geq (1+ \epsilon)n^{\frac{1}{Q(k)-1}} \Delta k^2$, or
	
		\item $k \geq C \log^{\lceil \frac{2}{g-2} \rceil} n$,
		where $C$ is some constant,
		and
		$\sigma(n) \geq 
				A
		\Delta \log^{\lceil \frac{4}{g-2} \rceil} n$,
		where $A=A(C)$ is a suitably chosen constant.
	
	\end{itemize}



\begin{thebibliography}{99}
\addcontentsline{toc}{chapter}{Bibliography}

\bibitem{AlonSpencer} 
N. Alon, J. H. Spencer,
\emph{The Probabilistic Method, 2nd ed},
Wiley, New York, 2000.

\bibitem{AndrenCasselgrenOhman}
Lina J. Andr\'en, C. J. Casselgren, L.-D. \"Ohman,
Avoiding arrays of odd order by Latin squares,
{\em Combinatorics, Probability \and Computing}
22 (2013),
184--212.


\bibitem{Bollobas}
B. Bollob\'as,
\emph{Modern Graph Theory},
Springer, New York, 1998.



\bibitem{Casselgren}
C. J. Casselgren,
Vertex coloring complete multipartite graphs
from random lists of size 2, \emph{Discrete Mathematics} 311 (2011), pp. 1150--1157.

\bibitem{Casselgren2}
C. J. Casselgren,
Coloring graphs
from random lists of size 2, 
\emph{European Journal of Combinatorics} 33 (2012), 168--181.

\bibitem{Casselgren3}
C. J. Casselgren,
Coloring graphs
from random lists of fixed size,
\emph{Random Structures and Algorithms} 
 44 (2014), 317-–327.


\bibitem{CasselgrenHaggkvist}
C. J. Casselgren, R. H\"aggkvist,
Coloring complete and complete bipartite graphs from random lists,
{\em Graphs and Combinatorics (in press)}.


\bibitem{ERT} 
P. Erd\H os, A. L. Rubin, H. Taylor, 
\emph{Choosability in graphs},
\emph{Proceedings West Coast Conf. on Combinatorics, Graph Theory and Computing,
Congressus Numerantium} XXVI, 1979, pp. 125--157.


\bibitem{Krivelevich1} 
M. Krivelevich, A. Nachmias, 
Coloring powers of cycles from random lists,
\emph{European Journal of Combinatorics} 25 (2004), pp. 961--968.


\bibitem{Krivelevich2}
M. Krivelevich, A. Nachmias,
Coloring Complete Bipartite Graphs from Random Lists,
\emph{Random Structures and Algorithms} 29 (2006), pp. 436--449. 


\bibitem{Vizing}
V. G. Vizing, 
Coloring the vertices of a graph with prescribed colors,
\emph{Metody Diskretnogo Analiza Teorii Kodov i Skhem} 29 (1976),
pp. 3--10 (in Russian). 


\end{thebibliography}
\end{document}